\documentclass[12pt]{article}
\usepackage{amsmath,amssymb,amsthm,amscd}

\setlength{\oddsidemargin}{0.5in} \setlength{\textwidth}{6.2in}
\setlength{\topmargin}{-0.3in} \setlength{\textheight}{9in}

\usepackage{graphics, graphpap}
\newtheorem{theorem}{Theorem}[section]
\newtheorem{corollary}[theorem]{Corollary}
\newtheorem{lemma}[theorem]{Lemma}

\theoremstyle{definition}
\newtheorem{definition}[theorem]{Definition}
\newtheorem{remark}[theorem]{Remark}

\def\Hom{\operatorname{Hom}}
\def\p{\frak p}
\def\m{\frak m}
\def\q{\frak q}

\def\Spec{\operatorname{Spec}}

 \def\Hom{\operatorname{Hom}}
\def\dim{\operatorname{dim}}
\def\Ext{\operatorname{Ext}}
\def\Im{\operatorname{Im}}

\def\depth{\operatorname{depth}}
\def\f-depth{\operatorname{f-depth}}
\def\gdepth{\operatorname{gdepth}}
\def\Supp{\operatorname{Supp}}
\def\Ass{\operatorname{Ass}}
\def\ann{\operatorname{ann}}

\def\Ker{\operatorname{Ker}}

\def\inf{\operatorname{inf}}

\def\sr{\rightarrow}

\newcommand{\fM}{\ensuremath{\mathcal M}}
\newcommand{\fR}{\ensuremath{\mathcal R}}

\begin{document}

\title{Asymptotic stability of certain sets of associated prime ideals of local cohomology modules}
\author{Nguyen Tu Cuong$^a$, Nguyen Van Hoang$^b$ and Pham Huu Khanh$^c$}
\date{Institute of Mathematics\\18 Hoang Quoc Viet Road,  10307  Hanoi, Vietnam}
\maketitle

{\begin{quote} \normalsize{\bf Abstract}{\footnote{{{\it Key words
and phrases}: associated prime,  local cohomology, depth, filter depth, generalized depth }  \hfill\break {{\it 2000 Subject Classification}:
13D45, 13E05} \hfill\break {This work is supported in part by the
National Basis Research Programme in Natural Science of
Vietnam.}\hfill\break {$^a$ E-mail: ntcuong@math.ac.vn}\hfill\break
{$^b$  E-mail: nguyenvanhoang1976@yahoo.com}\hfill\break {$^c$
E-mail: huukhanh75@yahoo.com}}. Let $(R,\m)$ be a Noetherian local
ring  $I, J$ two ideals of $R$ and $M$  a finitely generated $R-$module. It is first shown that for $k\geq -1$  the integer $r_k = \depth_k(I,J^nM/J^{n+1}M)$, it is the length of a maximal $(J^nM/J^{n+1}M)-$sequence in dimension $>k$ in $I$ defined by M. Brodmann and L. T. Nhan \cite{BN}, becomes for large $n$ independent of $n$. Then we prove in this paper that  the sets $\bigcup_{j\le r_k}\Ass_R(H^j_I(J^nM/J^{n+1}M))$ with $k=-1$ or $k=0$, and $\bigcup_{j\le r_1}\Ass_R(H^j_I(J^nM/J^{n+1}M))\cup\{\m\}$ are stable for large $n$. We also obtain similar results for modules $M/J^nM$.}
\end{quote}

\section{Introduction} Let $(R,\m)$ be a Noetherian local ring, $I, J$ two ideals of $R$ and $M$  a finitely generated $R-$module. This paper is concerned the asymptotic stability of sets of associated prime ideals  $\Ass_R(H^j_I(J^nM/J^{n+1}M))$ and $\Ass_R(H^j_I(M/J^nM))$. There are two starting points for our investigation as follows.
\medskip

Firstly, In 1979, M. Brodmann \cite{Br1} had proved that the sets $\Ass_R(J^nM/J^{n+1}M)$ and $\Ass_R(M/J^nM)$ are stable for large $n$. Based on this result he showed in \cite[Theorem 2 and Proposition12]{Br2} that the integers $\depth(I,J^nM/J^{n+1}M)$ and $\depth(I,M/J^nM)$ take constant values for large $n$. Recently, M. Brodmann and L.T. Nhan introduced the notion of $M-$sequence in dimension $>k$  in \cite{BN}: Let $k$ be an integer $k\ge -1$. A sequence $x_1,\ldots,x_r$ of elements of $\m$ is called an $M-$sequence
in dimension $> k$ if $x_i\notin\p$ for all $\p\in\Ass_R(M/(x_1,\ldots,x_{i-1})M)$ with $\dim(R/\p)> k$ and
all $i=1,\ldots, r$. They also showed that every maximal $M-$sequence in dimension $>k$ in $I$ have the same length, and the common length is the least integer $i$ such that there exists $\p\in\Supp(H^i_I(M))$ with $\dim(R/\p)>k$. For convenience, we denote this common length by $\depth_k(I,M)$. It should be mentioned that  $\depth_{-1}(I,M)$ is just  $\depth(I,M)$ the length of a maximal $M$-sequence in $I$, $\depth_0(I,M)$ is the filter depth $\f-depth(I,M)$ with respect to the ideal $I$ defined by R. L${\rm\ddot u}$ and Z. Tang \cite{LT}, and $\depth_1(I,M)$ is the generalized depth $\gdepth(I,M)$ with respect to the ideal $I$ defined by L.T. Nhan \cite{Nh}. The first result of this paper is a generalization of Brodmann's Theorem.

\begin{theorem}\label{L3} For all $k\ge -1$,  $\depth_k(I,J^nM/J^{n+1}M)$ and $\depth_k(I,M/J^nM)$ take constant values $r_k$ and $s_k$, respectively,  for large $n$ .
\end{theorem}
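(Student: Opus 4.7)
The plan is an induction on $r \ge 0$ showing that the predicate ``$\depth_k(I, N_n) \ge r$'' has a constant truth value for all sufficiently large $n$; here I write $N_n := J^nM/J^{n+1}M$, the case $N_n = M/J^nM$ being entirely analogous once one replaces Brodmann's stability of $\Ass_R(J^nM/J^{n+1}M)$ by his stability of $\Ass_R(M/J^nM)$. The essential ingredient is the following mild strengthening of the cited theorem of Brodmann: if $S = \bigoplus_{n \ge 0} S_n$ is a finitely generated standard-graded Noetherian ring with $S_0 = R$, and $H = \bigoplus_n H_n$ is a finitely generated graded $S$-module, then the sets $\Ass_R(H_n)$ are independent of $n$ for $n \gg 0$. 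This is a known consequence of the finiteness of $\Ass_S(H)$ combined with an analysis of the graded submodules $(0 :_H \p)$ for $\p \in \Ass_S(H)$, and is proved by adapting Brodmann's original argument in \cite{Br1}.

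Granting this strengthening, let $G = \bigoplus_{n \ge 0} J^nM/J^{n+1}M$, a finitely generated graded $\operatorname{gr}_J R$-module. Assume inductively that elements $x_1,\ldots,x_{r-1} \in I$ have already been chosen so as to form an $N_n$-sequence in dimension $>k$ for every sufficiently large $n$. Applying the strengthening to $G^{(r-1)} := G/(x_1,\ldots,x_{r-1})G$ --- which is again a finitely generated graded $\operatorname{gr}_J R$-module whose $n$th graded component is $N_n/(x_1,\ldots,x_{r-1})N_n$, because the $x_j$ act in degree $0$ --- yields a fixed finite set $A_{r-1}$ of primes with $\Ass_R\!\left(N_n/(x_1,\ldots,x_{r-1})N_n\right) = A_{r-1}$ for all large $n$. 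Set $A^{\ast}_{r-1} := \{\p \in A_{r-1} : \dim R/\p > k\}$. Prime avoidance gives one of two mutually exclusive alternatives: either $I \subseteq \p$ for some $\p \in A^{\ast}_{r-1}$, in which case no element of $I$ extends the sequence and the Brodmann--Nhan invariance of the length of maximal $N_n$-sequences in dimension $>k$ forces $\depth_k(I, N_n) = r - 1$ for all large $n$; or $I$ avoids every $\p \in A^{\ast}_{r-1}$, whence one selects $x_r \in I$ outside every such $\p$ and obtains an $N_n$-sequence $x_1,\ldots,x_r$ in dimension $>k$ for every large $n$, giving $\depth_k(I, N_n) \ge r$ uniformly. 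Since $\depth_k$ is bounded above by $\dim M$, the induction terminates and produces the desired common value $r_k$.

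The main obstacle I expect is the strengthening of Brodmann's theorem to arbitrary finitely generated graded $S$-modules, together with the bookkeeping that each iterate $G^{(i)} = G/(x_1,\ldots,x_i)G$ remains in this class with the expected graded components. If the strengthening in the precise form needed cannot be quoted from the literature it has to be proved here by following \cite{Br1} closely. Once that step is in place, the remainder is a routine combination of prime avoidance on a stable finite list of associated primes with the Brodmann--Nhan equi-length of maximal sequences in dimension $>k$.
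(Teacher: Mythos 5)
Your argument is correct in its essentials, and it is genuinely different from the proof in the paper; let me explain the comparison and flag a couple of points.

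The ``strengthening'' of Brodmann's theorem you are worried about is not an obstacle at all: it is exactly the paper's Lemma~\ref{L2}, which is stated for an arbitrary finitely generated graded module $\fM=\oplus_{n\ge 0}M_n$ over a standard graded Noetherian $R$-algebra (and attributed to Melkersson \cite[Theorem~3.1]{Me}). Moreover, for $N_n=M/J^nM$ you do not even need it in that generality: since $(M/J^nM)/(x_1,\ldots,x_{r-1})(M/J^nM)\cong M'/J^nM'$ with $M'=M/(x_1,\ldots,x_{r-1})M$, you can just reapply Brodmann to the new pair $(M',J)$ at each step. Your iterates $G^{(i)}=G/(x_1,\ldots,x_i)G$ are indeed finitely generated graded $\operatorname{gr}_J R$-modules with $G^{(i)}_n=N_n/(x_1,\ldots,x_i)N_n$, since the $x_j$ act through degree zero; this part of your bookkeeping is fine.

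The route is different from the paper's. The paper reformulates $\depth_k(I,-)$ via $\operatorname{Ext}$ modules (Lemma~\ref{LL0}: $\depth_k(I,M)=\inf\{j\mid\dim\Ext^j_R(R/I,M)>k\}$), proves an auxiliary Lemma~\ref{L221} that turns an $\operatorname{Ext}$-dimension bound holding along a subsequence into an $N_n$-sequence in dimension $>k$ for all large $n$, and then runs a pigeonhole-plus-contradiction argument to pin down the eventual value. You bypass the $\operatorname{Ext}$ machinery entirely: you iterate Brodmann's stability for the graded quotients $G^{(i)}$ and use prime avoidance on the stable finite set $A^\ast_{i}$, invoking only the Brodmann--Nhan equal-length property \cite[Lemma~2.4]{BN} to conclude $\depth_k(I,N_n)=r-1$ when $I$ is trapped in some $\p\in A^\ast_{r-1}$. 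Unpacked, the paper's Lemma~\ref{L221} is proved by essentially the same induction-with-prime-avoidance you use, so the substance is parallel; but your version is more direct and avoids both Lemma~\ref{LL0} and the contradiction step. What the paper's $\operatorname{Ext}$ formulation buys is the Corollary right after Theorem~\ref{L3} characterizing the eventual value by $\operatorname{Ext}$ dimensions, which falls out of their proof but not yours.

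One genuine (though easily repaired) gap: your closing sentence, ``since $\depth_k$ is bounded above by $\dim M$, the induction terminates,'' is false when $\depth_k(I,N_n)=\infty$, i.e.\ when $\dim(N_n/IN_n)\le k$. In that situation alternative 1 can never occur and your induction produces longer and longer sequences, each with its own (growing) threshold $n_r$, so the argument as written does not yield a single $n_0$ past which $\depth_k(I,N_n)=\infty$. The fix is the one the paper makes at the outset: by Lemma~\ref{L2}, $\dim(N_n/IN_n)$ is eventually constant, say $d'$. If $d'\le k$ then $\depth_k(I,N_n)=\infty$ for all large $n$ and we are done; if $d'>k$ then $\depth_k(I,N_n)\le\dim(N_n)-d'$ is uniformly bounded for large $n$ (Remark~\ref{R1}(i)) and your induction terminates. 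Adding this dichotomy at the start makes your proof complete.
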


Secondly, in 1990, C. Huneke \cite[Problem 4]{Hun} asked whether the set of associated primes of $H^i_I(M)$ is finite for all generated modules $M$ and all ideals $I$. Affirmative answers were given by G. Lyubeznik  \cite{Luy} and Huneke-R.Y. Sharp \cite{HuSh} for equicharacteristic regular local rings. Although, A. Singh  \cite{AS} and M. Katzman  \cite{Kat} provided  examples of finitely generated modules having some local cohomology with infinite associated prime ideals, the problem is still true in many situations ( see \cite{BF} \cite{KhS}, \cite{KhS1}, \cite{Mar}, \cite{Nh}...). Especially, it was proved in \cite{KhS1} and \cite{Nh} that $\Ass_R(H^j_I(M))$ is finite for all $j\leq \depth_1(I,M)$.  From this and  Theorem \ref{L3} we obtain for all $j\leq r_1=\depth_1(I,J^nM/J^{n+1}M)$ and $i\leq s_1=\depth_1(I,M/J^nM)$ that the sets $\Ass_R(H^{j}_I(J^nM/J^{n+1}M)$ and $\Ass_R(H^{i}_I(M/J^nM)$ are finite for large $n$. So it is natural to ask the following question:\\
{\it Are  the sets $\Ass_R(H^{j}_I(J^nM/J^{n+1}M))$ and $\Ass_R(H^{i}_I(M/J^nM))$ for  $j\leq r_1$ and $i\leq s_1$   stable for  large $n$? }
\medskip

The main result of this paper is the following theorem which shows that a weakening of the above questions may have a positive answer. 

\begin{theorem}\label{MTh} Let $k\ge -1$ be an integer, and  $r_k$,  $s_k$ as in Theorem \ref{L3}. Then the following statements are true.
\begin{itemize}
\item[\rm (i)] $\Ass_R(H^{r_{-1}}_I(J^nM/J^{n+1}M))$ and $\Ass_R(H^{s_{-1}}_I(M/J^nM))$ are stable for large $n$.
\item[\rm (ii)] $\displaystyle\bigcup_{j\le r_0}\Ass_R(H_I^j(J^nM/J^{n+1}M))$ and $\displaystyle\bigcup_{i\le s_0}\Ass_R(H_I^i(M/J^nM))$ are stable for large $n$.
\item[\rm (iii)] $\displaystyle\bigcup_{t\le j}\Ass_R(H_I^t(J^nM/J^{n+1}M))\cup \{\m\}$ and $\displaystyle\bigcup_{t\le i}\Ass_R(H_I^t(M/J^nM))\cup \{\m\}$ for all $j\le r_1$ and $i\le s_1$ are stable for large $n$.
\end{itemize}
\end{theorem}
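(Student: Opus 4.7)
The plan is to reduce every part to an asymptotic stability statement for associated primes of Ext modules. The key input I will use is the following Brodmann-type result: for every finitely generated $R$-module $N$ and every $j\ge 0$, the sets
\[
\Ass_R\bigl(\Ext^j_R(N, J^nM/J^{n+1}M)\bigr)\quad\text{and}\quad\Ass_R\bigl(\Ext^j_R(N, M/J^nM)\bigr)
\]
become independent of $n$ for $n\gg 0$. This extends Brodmann's original stability of $\Ass_R(J^nM/J^{n+1}M)$ (the case $N=R$, $j=0$) and is the natural workhorse for any such asymptotic question.

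Part (i) is then immediate: by Theorem \ref{L3} the first non-vanishing local cohomology of $P_n:=J^nM/J^{n+1}M$ (resp.\ $M/J^nM$) sits in a fixed degree $r=r_{-1}$ (resp.\ $s_{-1}$) for $n\gg 0$, and for first non-vanishing local cohomology the classical identity
\[
\Ass_R\bigl(H^r_I(P_n)\bigr)=\Ass_R\bigl(\Ext^r_R(R/I, P_n)\bigr)
\]
holds; applying the Ext-stability tool with $N=R/I$ and $j=r$ yields the claim.

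For (ii) and (iii) I would first construct a single sequence $\underline x=x_1,\dots,x_{r_k}\in I$ (with $k=0$ in (ii) and $k=1$ in (iii)) that is simultaneously an $N_n$-sequence in dimension $>k$ for all $n\gg 0$, where $N_n$ denotes $J^nM/J^{n+1}M$ or $M/J^nM$. Existence follows by inductive prime avoidance: at step $i$ the primes of dimension $>k$ to be avoided lie in
\[
\bigcup_n\Ass_R\bigl(N_n/(x_1,\dots,x_{i-1})N_n\bigr)\subseteq\bigcup_n\Ass_R\bigl(\Ext^{i-1}_R(R/(x_1,\dots,x_{i-1}), N_n)\bigr),
\]
a finite set by the stability tool. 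Once $\underline x$ is fixed, the description of $\depth_k$ via $M$-sequences in dimension $>k$ from \cite{BN}, \cite{LT}, \cite{Nh} gives, for each $t\le r_k$,
\[
\Ass_R\bigl(H^t_I(N_n)\bigr)\setminus\{\p:\dim R/\p\le k\}\subseteq\Ass_R\bigl(N_n/\underline x N_n\bigr)\cap V(I),
\]
with equality when $t=r_k$. Since $\Ass_R(N_n/\underline x N_n)\subseteq\Ass_R(\Ext^{r_k}_R(R/(\underline x), N_n))$, the right-hand side stabilises in $n$ by a final application of the Ext-stability tool, so the ``high-dimensional'' associated primes are handled.

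The low-dimensional primes are bookkept separately. In (ii), only $\m$ has dimension $\le 0$, and its membership in $\bigcup_{t\le r_0}\Ass_R(H^t_I(N_n))$ is controlled by a single Ext computation and hence stabilises. In (iii) the adjunction of $\{\m\}$ absorbs any dimension-$0$ fluctuation, while the finite set of dimension-$1$ associated primes (finiteness is guaranteed by \cite{KhS1}, \cite{Nh}) is shown stable by a secondary induction on $j\le r_1$ using the long exact sequence of local cohomology associated with $0\to N_n\xrightarrow{x}N_n\to N_n/xN_n\to 0$ for a suitable $x\in\m$. The main obstacle lies precisely in this final step for (iii): establishing stability of dimension-$1$ primes and carrying out the induction on $j$ requires a careful interleaving of the long exact sequence with the uniform-sequence construction and the Ext-stability result; the rest of the proof is a fairly routine assembly once this point is in place.
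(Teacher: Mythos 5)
Your overall strategy is genuinely different from the paper's: you try to drive everything through a single ``Ext-stability'' black box and then compare $\Ass_R(H^t_I(N_n))$ directly to the stable sets $\Ass_R(\Ext^j_R(-,N_n))$, whereas the paper uses a uniform (filter/generalized) regular sequence together with the identities of \cite{CH}, \cite{BN} and \cite{CH1} to bound the associated primes, and then \emph{localizes} to reduce to the $k=-1$ case. That localization step is exactly what your proposal is missing, and it is the crux.

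Concretely, there are three gaps. First, and most seriously, knowing that $\Ass_R(H^{r_k}_I(N_n))$ (away from low-dimensional primes) is eventually contained in a fixed finite set does \emph{not} give stability: a prime $\p$ could still appear for some large $n$ and disappear for others. You assert equality $\Ass_R(H^{r_k}_I(N_n))\setminus\{\dim\le k\}=\Ass_R(N_n/\underline{x}N_n)\cap V(I)$ to get around this, but this equality is not justified and is not what holds; only the inclusion $\Ass_R(H^{r_0}_I(N_n))\subseteq\Ass_R(N_n/\underline{x}N_n)$ is available (via $H^{r_0}_I(N_n)\cong H^0_I(H^{r_0}_{(\underline x)}(N_n))$ and \cite[Prop.\ 2.6]{BN}), and this is what the paper proves. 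The paper then obtains genuine stability by taking each $\p$ with $\dim(R/\p)\ge 1$ in that finite set, observing $\depth(I_\p,(N_n)_\p)=r_0$ for all large $n$ (using Theorem \ref{L3} and Lemma \ref{LL0} at $\p$), and invoking the already-established depth case (Theorem \ref{KL}) \emph{after localization}. Without this reduction, your ``high-dimensional primes are handled'' step is not a proof.

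Second, your Ext-stability tool is clear for $N_n=J^nM/J^{n+1}M$ (these are the graded pieces of a finitely generated module over $\operatorname{gr}_J(R)$, so $\Ext^j_R(N,-)$ applied termwise again gives a finitely generated graded module, and Lemma \ref{L2} applies). But for $N_n=M/J^nM$ the modules $\Ext^j_R(N,M/J^nM)$ are not graded pieces of a finitely generated graded module, and stability of their associated primes is not an off-the-shelf consequence of Brodmann's theorem. You state it without proof or reference; the paper sidesteps it entirely by running the short exact sequence $0\to J^nM/J^{n+1}M\to M/J^{n+1}M\to M/J^nM\to 0$ through local cohomology and doing a case analysis on the eventual values ($r-2\ge s$, $r-1=s$, $r=s$ in Theorem \ref{T41}, and analogously in Theorems \ref{T43}, \ref{T53}). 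You would need to either supply a proof of Ext-stability for $M/J^nM$ or replace it by an argument of this type. Third, you openly concede that the dimension-one bookkeeping and the induction on $j$ in part (iii) are not carried out; the paper's Theorem \ref{T53} needs both the localization technique above (via Theorem \ref{T52}) and a delicate use of Lemma \ref{L42} together with the finiteness of $\Supp(H^{j-1}_I(M/J^nM))$ from Lemma \ref{L51}, so this is a substantive omission rather than ``routine assembly''.
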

Notice here that since $r_{-1}\leq r_0\leq r_1$ as well as $s_{-1}\leq s_0\leq s_1$ and the modules $H^{j}_I(J^nM/J^{n+1}M),$ $ H^{i}_I(M/J^nM)$ are either vanished or Artinian for all  $j< r_0$ and $i< s_0$ by \cite{LT}, the  question is trivial at these $j$, $i$. So the conclusion (i) of Theorem \ref{MTh} is the first non-trivial answer to the above question at $j=r_{-1}$  and $i= s_{-1}$ when $r_{-1}=r_0$  and $ s_{-1}=s_0$. Moreover, the conclusion (ii) (the conclusion (iii), respectively)  of Theorem \ref{MTh} says that except the maximal ideal $\m$ (a finite set, respectively)
the sets in the question above are stable for all $j\leq r_0$ and $i\leq s_0$ ($j\leq r_1$ and $i\leq s_1$,  respectively).  
\medskip

This paper is divided into 5 sections. In Section 2 we study some properties concerning generalizations of the depth of a finite generated module over a local ring. Theorem \ref{L3} will be  also showed in this section.  Sections 3, 4 and 5 are devoted to prove the statements (i), (ii) and (iii), respectively, of Theorem \ref{MTh}.

\section{Asymptotic stability of generalizations of depth}
Throughout this paper, $(R,\m)$ is a Noetherian local ring with the maximal ideal $\m$, $I, J$ two ideals of $R$ and $M$ is  a finitely generated $R-$module.
\begin{definition} (\cite[Definition 2.1]{BN}) Let $k\ge -1$ be an integer. A sequence
$x_1,\ldots,x_r$ of elements of $\m$ is called an {\it $M-$sequence
in dimension $> k$} if $x_i\notin\p$ for all
$\p\in\Ass_R(M/(x_1,\ldots,x_{i-1})M)$ with $\dim(R/\p)> k$ and
all $i=1,\ldots, r$.
\end{definition}

It is clear that $x_1,\ldots ,x_r$ is an $M-$sequence in dimension $>-1$ if and only if it is a regular sequence of $M$; and $x_1,\ldots ,x_r$ is an $M-$sequence in dimension $>0$ if and only if it is a filter regular sequence of $M$  introduced by P. Schenzel, N. V. Trung and the first author in \cite{Cst}. Moreover, $x_1,\ldots ,x_r$ is an $M-$sequence in dimension $>1$ if and only if it is a generalized regular sequence of $M$ defined by L. T. Nhan in \cite{Nh}.

\begin{remark}\label{R1} (i) Let $k$ be a non negative integer.  Assume that $\dim(M/IM)> k$. Then any $M-$sequence in dimension $> k$ in $I$ is of finite length, and all maximal $M-$sequence in dimension $>k$ in $I$ have the same length which is equal to the least integer $i$ such that there exists $\p\in\Supp(H^i_I(M))$ with $\dim(R/\p)>k$ (\cite[Lemma 2.4]{BN}). We denote in this case by $\depth_k(I,M)$ the length of a maximal $M-$sequence in dimension $>k$ in $I.$ Moreover, if $x_1,\ldots,x_r$ is a maximal $M-$sequence in dimension $>k$ in $I$, then $x_1,\ldots,x_r$ is a part of system of parameters of $M$, and thus $\depth_k(I,M)\le\dim(M)-\dim(M/IM)$. Note that $\depth_{-1}(I,M)$ is  the usual depth $\depth(I,M)$ of $M$ in $I$,  $\depth_0(I,M)$ is the filter depth $\f-depth(I,M)$ of $M$ in $I$  denoted by L${\rm\ddot u}$ and Tang in \cite{LT} and  $\depth_1(I,M)$ is just the generalized depth $\gdepth(I,M)$ of $M$ in $I$ defined by Nhan \cite{Nh}. 
\medskip

\noindent(ii) If $\dim(M/IM)\leq k$, then we can choose an $M-$sequence in dimension $> k$ in $I$ of length $r$ for every positive integer  $r$, and  we set in this case that $\depth_k(I,M)=\infty$. 
\end{remark}

Let  $S$ be a subset of $\Spec(R)$ and  $i\ge 0$ an integer, we set 
$$S_{\ge i}=\{\p\in S| \dim (R/\p)\ge i\}\text{ and }S_{>i}=\{\p\in S| \dim (R/\p)>i\}.$$

\begin{lemma}\label {LL0} Let $k\ge -1$ be an integer. Then 
$$
\begin{aligned}
\depth_k(I,M)&=\inf\{j\mid\dim(\Ext^j_R(R/I,M))>k\}\notag\\
&=\inf\{\depth_{k-i}(I_\p,M_\p)\mid\p\in\Supp(M/IM)_{\ge i}\}\notag
\end{aligned}
$$
for all $0\leq i\leq k+1$, where we use the convenience that $\inf(\emptyset)=\infty.$
\end{lemma}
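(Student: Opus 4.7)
I handle the two equalities in turn.

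For the first equality, $\depth_k(I,M)=\inf\{j\mid\dim\Ext^j_R(R/I,M)>k\}$, I induct on $r=\depth_k(I,M)$. When $\dim(M/IM)\le k$, both sides equal $\infty$: every $\Ext^j_R(R/I,M)$ is supported in $V(I)\cap\Supp M=\Supp(M/IM)$ and hence has dimension $\le k$. So assume $\dim(M/IM)>k$. For $r=0$, prime avoidance yields $\p\in\Ass_R(M)\cap V(I)$ with $\dim(R/\p)>k$, and the identity $\Ass_R(\Hom_R(R/I,M))=\Ass_R(M)\cap V(I)$ then gives $\dim\Hom_R(R/I,M)>k$; the converse uses the same identity. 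For $r\ge 1$, pick $x\in I$ avoiding every $\p\in\Ass_R(M)$ with $\dim(R/\p)>k$, so that $\depth_k(I,M/xM)=r-1$. Since $I$ annihilates each $\Ext^j_R(R/I,M)$, multiplication by $x$ is zero on these modules, and the Ext long exact sequence for $0\to M\xrightarrow{x}M\to M/xM\to 0$ splits into
$$0\to\Ext^{j-1}_R(R/I,M)\to\Ext^{j-1}_R(R/I,M/xM)\to\Ext^j_R(R/I,M)\to 0.$$
The inductive hypothesis for $M/xM$, together with the elementary fact that $0\to A\to B\to C\to 0$ with $\dim A\le k<\dim B$ forces $\dim C>k$, yields the claimed bounds.

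For the second equality, fix $0\le i\le k+1$. The case $\dim(M/IM)\le k$ again reduces both sides to $\infty$: for $\p\in\Supp(M/IM)_{\ge i}$ and $\q\subseteq\p$ in $\Supp(M/IM)$, the inequality $\htt(\p/\q)\le\dim(R/\q)-\dim(R/\p)\le k-i$ gives $\dim_{R_\p}(M_\p/I_\p M_\p)\le k-i$, hence $\depth_{k-i}(I_\p,M_\p)=\infty$. For $\dim(M/IM)>k$, I first establish $\depth_k(I,M)\le\depth_{k-i}(I_\p,M_\p)$ at each $\p\in\Supp(M/IM)_{\ge i}$ by localizing a maximal $M$-sequence $x_1,\dots,x_r$ in dimension $>k$ in $I$ to $\p$: if $\q R_\p\in\Ass_{R_\p}(M_\p/(x_1,\dots,x_{s-1})M_\p)$ had $\dim(R_\p/\q R_\p)>k-i$, then $\q\in\Ass_R(M/(x_1,\dots,x_{s-1})M)$ would satisfy
$$\dim(R/\q)\ge\dim(R/\p)+\htt(\p/\q)>i+(k-i)=k,$$
forcing $x_s\notin\q$, so the localized sequence remains $M_\p$-regular in dimension $>k-i$.

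For the reverse inequality, set $r=\depth_k(I,M)$, invoke the first equality to pick $\q\in\Supp(\Ext^r_R(R/I,M))$ with $d:=\dim(R/\q)>k\ge i-1$, and choose a chain $\q=\q_0\subsetneq\q_1\subsetneq\cdots\subsetneq\q_d=\m$ realizing this dimension. Set $\p:=\q_{d-i}$; then $\p\in\Supp(M/IM)_{\ge i}$ and $\htt(\p/\q)\ge d-i>k-i$, so $\q R_\p\in\Supp_{R_\p}(\Ext^r_{R_\p}(R_\p/I_\p,M_\p))$ has $\dim_{R_\p}(R_\p/\q R_\p)>k-i$, and applying the first equality over $R_\p$ yields $\depth_{k-i}(I_\p,M_\p)\le r$. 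The main subtlety is precisely this construction of $\p$: without any catenarity hypothesis, one must extract both $\dim(R/\p)\ge i$ and $\htt(\p/\q)>k-i$ simultaneously from a single chain of length $\dim(R/\q)$, which succeeds because such a chain meets every intermediate dimension exactly once.
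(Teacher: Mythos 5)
Your proof of the second equality is essentially the paper's argument (localize the sequence one way, produce a witnessing prime the other way), with the welcome extra detail of actually constructing $\p$ from a saturated chain $\q=\q_0\subsetneq\cdots\subsetneq\q_d=\m$ and observing that $\p\in\Supp(M/IM)_{\ge i}$ is all one needs; that part is correct. Your proof of the first equality, by contrast, takes a genuinely different route: the paper simply combines Brodmann--Nhan's formula $\depth_k(I,M)=\inf\{i\mid\Supp(H^i_I(M))_{>k}\ne\emptyset\}$ with the identity $\bigcup_{j\le l}\Supp(H^j_I(M))=\bigcup_{j\le l}\Supp(\Ext^j_R(R/I,M))$ from \cite[Lemma 2.8]{CH1}, whereas you give a self-contained induction on $\depth_k(I,M)$ via the $\Ext$ long exact sequence. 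That is an attractive, more elementary strategy, but as written the induction step has a gap.

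The gap: for $k\ge 0$, an element $x\in I$ chosen to avoid only those $\p\in\Ass_R(M)$ with $\dim(R/\p)>k$ is in general a zerodivisor on $M$ (it may well lie in associated primes of dimension $\le k$), so $0\to M\xrightarrow{x}M\to M/xM\to 0$ is \emph{not} a short exact sequence, and the three-term sequences $0\to\Ext^{j-1}_R(R/I,M)\to\Ext^{j-1}_R(R/I,M/xM)\to\Ext^j_R(R/I,M)\to 0$ you derive from it do not exist. The repair is standard and is precisely the point of the paper's Lemma~\ref{L221}: split the four-term exact sequence $0\to(0:_Mx)\to M\xrightarrow{x}M\to M/xM\to 0$ into $0\to(0:_Mx)\to M\to M/(0:_Mx)\to 0$ and $0\to M/(0:_Mx)\xrightarrow{x}M\to M/xM\to 0$, note that $\Ass_R(0:_Mx)\subseteq\Ass_R(M)\cap V(x)$ consists only of primes of dimension $\le k$ by the choice of $x$, hence $\dim(0:_Mx)\le k$ and $\dim\Ext^j_R(R/I,0:_Mx)\le k$ for all $j$, and then propagate the bound $\dim(\cdot)\le k$ through the two resulting long exact sequences rather than through clean short exact sequences. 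With that modification (and a small independent argument that $\dim\Hom_R(R/I,M)\le k$ when $r\ge 1$, which follows directly from your $r=0$ equivalence) the induction closes. For $k=-1$ the issue does not arise, since there an $M$-sequence in dimension $>-1$ is a genuine regular sequence.
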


\begin{proof} Since $\depth_k(I,M)=\infty$ if and only if $\dim(M/IM)\leq k$, the statement is clear for the case $\depth_k(I,M)=\infty$. Assume that  $r=\depth_k(I,M)$ is a non negative integer. By \cite[Lemma 2.4]{BN}, we get $$r=\inf\{i\mid\exists\p\in\Supp(H^i_I(M)),\dim(R/\p)>k\}.$$
Moreover, by \cite[Lemma 2.8]{CH1}, we have 
$$\bigcup_{j\le l}\Supp(H^j_I(M))=\bigcup_{j\le l}\Supp(\Ext^j_R(R/I,M))$$ for all $l\ge 0$. It follows that $r=\inf\{j\mid\dim(\Ext^j_R(R/I,M))>k\}.$
\medskip
To prove the second equality, let $x_1,\ldots,x_r\in I$ be an $M-$sequence in dimension $>k$, and $i\in\{0,\ldots,k+1\}$. For each $\p\in\Supp(M/IM)_{\ge i}$,   $x_1/1,\ldots,x_r/1$ is an $M_\p-$sequence in dimension $>k-i$ in $I_\p$; and thus $r\le\depth_{k-i}(I_\p,M_\p)$. On the other hand, by the first equality, there exists $\q\in\Supp(\Ext^r_R(R/I,M))$ with $\dim(R/\q)>k$. Hence, we can choose a prime ideal $\p'$ such that  $\p'\supseteq\q$, $\dim(R/\p')=i$ and $\dim(R_{\p'}/\q R_{\p'})>k-i$. Therefore $\q R_{\p'}\in\Supp_{R_{\p'}}(\Ext^r_{R_{\p'}}(R_{\p'}/I_{\p'},M_{\p'}))_{>k-i}$. It follows that $r\ge\depth_{k-i}(I_{\p'},M_{\p'})$,  and so $r=\inf\{\depth_{k-i}(I_\p,M_\p)\mid\p\in\Supp(M/IM)_{\ge i}\}$ as required.
\end{proof} 

Let  $\fR=\oplus_{n\ge 0}R_n$ be finitely generated a standard graded algebra over $R_0=R$, and $\fM=\oplus_{n\ge 0}M_n$ is a finitely generated graded $\fR-$module. For the convenience, we  use $N_n$ to denote from now on  either  the $R-$module $M_n$ or $M/J^nM$. Then Brodmann's Theorem on the asymptotic stability of associated prime ideals \cite{Br1} (see also \cite[Theorem 3.1]{Me}) can be stated as follows. 
 
\begin{lemma} \label{L2} The set  $\Ass_R(N_n)$ is stable for large $n$.
\end{lemma}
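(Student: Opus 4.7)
My approach is to handle both cases $N_n=M_n$ and $N_n=M/J^nM$ through the machinery of finitely generated graded modules over standard graded Noetherian rings.

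For $N_n=M_n$, I would first show that $\bigcup_{n\ge 0}\Ass_R(M_n)$ is finite. If $\p=\ann_R(x)$ for a homogeneous $x\in M_n$, then $\ann_\fR(x)\cap R=\p$, so at least one minimal prime of $\ann_\fR(x)$ in $\fR$ contracts to $\p$. Since such minimal primes lie in the finite set $\Ass_\fR(\fM)$, we obtain
$$\bigcup_{n\ge 0}\Ass_R(M_n)\subseteq\{P\cap R : P\in\Ass_\fR(\fM)\},$$
which is finite. For stability, I would fix a candidate $\p$ and study $\mathcal{H}:=\Hom_R(R/\p,\fM)\cong\{y\in\fM:\p y=0\}$, a finitely generated graded $\fR$-submodule of $\fM$. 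A direct verification shows that $\p\in\Ass_R(M_n)$ if and only if $(\mathcal{H}_\p)_n\ne 0$, where $\mathcal{H}_\p$ denotes localization at $\p$ viewed as a prime of $R$. Now $\mathcal{H}_\p$ is a finitely generated graded module over the standard graded Noetherian algebra $\fR\otimes_R k(\p)$ over the field $k(\p)$, and a Hilbert-function argument shows that its $n$-th graded piece is either eventually zero or eventually nonzero. This yields stability of $\{n:\p\in\Ass_R(M_n)\}$.

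For $N_n=M/J^nM$, I would first apply the previous case to the finitely generated graded module $\bigoplus_n J^nM/J^{n+1}M$ over the associated graded ring $\operatorname{gr}_J(R)=\bigoplus_n J^n/J^{n+1}$, yielding stability of $\Ass_R(J^nM/J^{n+1}M)$. Then I would use the short exact sequences
$$0\sr J^nM/J^{n+1}M\sr M/J^{n+1}M\sr M/J^nM\sr 0$$
and the resulting inclusions $\Ass_R(M/J^{n+1}M)\subseteq\Ass_R(J^nM/J^{n+1}M)\cup\Ass_R(M/J^nM)$. Finiteness of $\bigcup_n\Ass_R(M/J^nM)$ follows from a parallel argument applied to the Rees module $\bigoplus_n J^nM$ over $R[Jt]$; an induction on $n$ combined with the stability at the associated-graded level then transfers to stability of $\Ass_R(M/J^nM)$.

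The main obstacle I expect is the dichotomy that $(\mathcal{H}_\p)_n$ must be either eventually zero or eventually nonzero: this rules out oscillatory membership of $\p$ in $\Ass_R(N_n)$ and is the substantive content of Brodmann's theorem. The surrounding reductions (from associated primes of $R$-modules to associated primes of $\fR$-modules, and from $M/J^nM$ to the associated-graded module) are routine, but the eventual-stability step genuinely requires the Hilbert-function behavior of finitely generated graded modules over standard graded algebras over a field.
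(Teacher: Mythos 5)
The paper does not prove this lemma: it records it as Brodmann's asymptotic-stability theorem, citing \cite{Br1} and \cite[Theorem 3.1]{Me}. So there is no ``paper's proof'' to compare against, and what you have done is reconstruct the standard argument. Your treatment of the graded case $N_n=M_n$ is correct and is indeed the heart of the matter: the finiteness of $\bigcup_n\Ass_R(M_n)$ via contraction from $\Ass_\fR(\fM)$ (one should add that since $\ann_\fR(x)\cap R=\p$, the degree-zero part of $\fR/\ann_\fR(x)$ is the domain $R/\p$, so some minimal prime of $\ann_\fR(x)$ — which is an associated prime of $\fR x\subseteq\fM$ — really does contract to $\p$), the reduction of membership $\p\in\Ass_R(M_n)$ to non-vanishing of the $n$-th graded piece of $\mathcal H_\p=(0:_\fM\p)_\p$, and the Hilbert-function dichotomy for finitely generated graded modules over the standard graded $k(\p)$-algebra $\fR\otimes_R k(\p)$. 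This is exactly the substantive content.

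For $N_n=M/J^nM$ two points need repair. First, a parallel argument applied to the Rees module $\bigoplus_n J^nM$ over $R[Jt]$ would control $\bigcup_n\Ass_R(J^nM)$, not $\bigcup_n\Ass_R(M/J^nM)$; the finiteness you want actually already follows from the associated-graded case together with the inclusion you wrote, by induction: $\Ass_R(M/J^nM)\subseteq\Ass_R(M/JM)\cup\bigcup_{k<n}\Ass_R(J^kM/J^{k+1}M)$, and the right-hand side is finite. Second and more importantly, ``induction plus stability at the graded level transfers'' is a hand-wave that omits the step which actually closes the argument: you also need the \emph{submodule} inclusion $\Ass_R(J^nM/J^{n+1}M)\subseteq\Ass_R(M/J^{n+1}M)$. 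With $A^*$ the eventual value of $\Ass_R(J^nM/J^{n+1}M)$, this gives, for $n$ large, the sandwich $A^*\subseteq\Ass_R(M/J^{n+1}M)\subseteq A^*\cup\Ass_R(M/J^nM)$; hence the finite sets $\Ass_R(M/J^nM)\setminus A^*$ are eventually decreasing and stabilize, and then $\Ass_R(M/J^nM)=A^*\cup\bigl(\Ass_R(M/J^nM)\setminus A^*\bigr)$ stabilizes. Without the lower bound $A^*\subseteq\Ass_R(M/J^{n+1}M)$, the inclusion you cite only bounds $\Ass_R(M/J^nM)$ above modulo $A^*$ and does not rule out oscillation inside $A^*$.
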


\begin{lemma}\label{L221} Let $k\ge -1$ and $r\ge 1$ be integers. If $\dim(\Ext^i_R(R/I,N_n))\le k$ for infinitely many $n$ and all $i<r$. Then there always  exists a sequence of $r$ elements  $x_1,\ldots,x_r\in I$ which is  an $N_n-$sequence in dimension $>k$  for all large $n$.
\end{lemma}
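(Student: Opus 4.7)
The argument is by induction on $r\ge 1$. For the base case $r=1$, the identity $\Ass_R(\Hom_R(R/I,N_n))=\Ass_R(N_n)\cap V(I)$ shows that $\dim(\Ext^0_R(R/I,N_n))\le k$ is equivalent to the statement that every $\p\in\Ass_R(N_n)$ with $\dim(R/\p)>k$ fails to contain $I$. By Lemma \ref{L2}, $\Ass_R(N_n)$ stabilizes to a set $A$ for all $n\gg 0$, so $\dim(\Ext^0_R(R/I,N_n))$ is eventually constant and the infinitely-many assumption forces the equivalent condition to hold for all large $n$. Then $\{\p\in A:\dim(R/\p)>k\}$ is a finite collection of primes, none containing $I$, so prime avoidance produces $x_1\in I$ outside every such prime; this $x_1$ is an $N_n$-sequence in dimension $>k$ for all large $n$.

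For the inductive step, apply the induction hypothesis in degree $r-1$ to obtain $x_1,\ldots,x_{r-1}\in I$ forming an $N_n$-sequence in dimension $>k$ for $n\gg 0$, and set $N_n^{(j)}:=N_n/(x_1,\ldots,x_j)N_n$. The key subclaim is that for $n$ in an infinite subset,
$$\dim(\Ext^i_R(R/I,N_n^{(j)}))\le k\qquad\text{for all }i<r-j.$$
This is proved by a secondary induction on $j$: the base $j=0$ is the hypothesis, and from the multiplication map $x_{j+1}\colon N_n^{(j)}\to N_n^{(j)}$ one extracts the two short exact sequences
\begin{align*}
0\to K\to N_n^{(j)}\to L\to 0,\qquad 0\to L\to N_n^{(j)}\to N_n^{(j+1)}\to 0,
\end{align*}
where $K=(0:_{N_n^{(j)}}x_{j+1})$ and $L=x_{j+1}N_n^{(j)}$. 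Because $x_{j+1}$ avoids every prime of $\Ass_R(N_n^{(j)})$ of dimension $>k$, we have $\dim(K)\le k$, and therefore $\dim(\Ext^i_R(R/I,K))\le k$ for every $i$; chasing the two $\Ext^*_R(R/I,-)$ long exact sequences then transports the dimension bound from $N_n^{(j)}$ to $N_n^{(j+1)}$ at the cost of one homological degree, yielding the subclaim.

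Setting $j=r-1$ gives $\dim(\Ext^0_R(R/I,N_n^{(r-1)}))\le k$ for infinitely many $n$, while the family $\{N_n^{(r-1)}\}$ is still in the scope of Lemma \ref{L2}: in the graded case it is the $n$th component of $\fM/(x_1,\ldots,x_{r-1})\fM$, and in the $J$-adic case it equals $M'/J^nM'$ with $M'=M/(x_1,\ldots,x_{r-1})M$. Applying the base case to $\{N_n^{(r-1)}\}$ produces $x_r\in I$ such that $x_1,\ldots,x_r$ is an $N_n$-sequence in dimension $>k$ for all large $n$. The main obstacle is the secondary induction, since $x_{j+1}$ need not be regular on $N_n^{(j)}$; the bound $\dim(K)\le k$, built into the very notion of a sequence in dimension $>k$, is precisely what absorbs the nonzero kernel into the allowed error term.
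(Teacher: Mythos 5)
Your proof is correct and is essentially the same as the paper's: both argue by induction on $r$, use Brodmann's stability (Lemma~\ref{L2}) together with prime avoidance in the base case, and push the Ext-dimension bound through a quotient by one element via the same two short exact sequences built from $K=(0:x)$ and $L=xN\cong N/K$. The only difference is the direction of recursion---the paper chooses $x_1$ first and applies the inductive hypothesis to the family $\{N_n/x_1N_n\}$ (noting tacitly, as you do explicitly, that this is again a family of the allowed graded or $J$-adic form), whereas you invoke the hypothesis first to obtain $x_1,\ldots,x_{r-1}$ and then run a secondary induction to transport the bound to $N_n^{(r-1)}$ before applying the base case for $x_r$---but this is a routine reorganization of the same computation.
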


\begin{proof} Assume that $T$ is an infinite subset of the set of integers such that for all $n\in T$ and  $i<r$ we have $\dim(\Ext^i_R(R/I,N_n))\le k$ . We proceed by induction on $r$. If $r=1$, then $\dim(\Hom(R/I,N_n))\le k$ for all $n\in T$. It implies  $I\nsubseteq\p$ for all $\p\in\Ass_R(N_n)_{>k}$ and all $n\in T$. Since $\Ass_R(N_n)$ is stable for large $n$ by Lemma \ref{L2},  there exists an integer $a\in T$ such that $I\nsubseteq\p$ for all $\p\in\Ass_R(N_n)_{>k}$ and all $n\ge a$. Hence, we can choose an element $x_1\in I$ which is an $N_n-$sequence in dimension $>k$ for all $n\ge a$. Assume that $r>1$. Note that $\dim(0:_{N_n}x_1)\le k$ and $\dim(\Ext^i_R(R/I,N_n))\le k$ for all $n\in T$ with $n\ge a$ and all $i<r$. From this, by using the following exact sequences
$$\Ext^i_R(R/I,N_n)\sr\Ext^i_R(R/I,N_n/(0:_{N_n}x_1))\sr\Ext^{i+1}_R(R/I,(0:_{N_n}x_1))$$
and 
$$\Ext^{i-1}_R(R/I,N_n)\sr\Ext^{i-1}_R(R/I,N_n/x_1N_n)\sr\Ext^i_R(R/I,N_n/(0:_{N_n}x_1)),$$
we obtain that $\dim(\Ext^i_R(R/I,N_n/x_1N_n))\le k$ for all $n\in T$ with $n\ge a$ and all $i<r-1$. Hence, by  inductive hypothesis, there exists an $N_n/x_1N_n-$sequence in dimension $>k$ $x_2,\ldots,x_r\in I$ for all  $n\ge a$ for some $a\in T$ as required.
\end{proof}

\begin{proof}[\bf Proof of Theorem \ref{L3}] By Lemma \ref{L2}, there is an integer $u>0$ such that $d=\dim(N_n)$ and $d'=\dim(N_n/IN_n)$ for all $n\ge u$. If $d'\leq k$, then $\depth_k(I,N_n)=\infty$ for $n\ge u$, and the conclusion follows.\\
Assume that $d'>k$. Then $0\le\depth_k(I,N_n)\leq d-d'$ for all $n\ge u$ by Remark \ref {R1}, (i). Thus there exists an infinite subset $T$ of $\Bbb Z$, and an integer $r\in\{0,\ldots,d-d'\}$ such that 
$$r=\inf\{i\mid\dim(\Ext^i_R(R/I,N_n))>k\}$$
for all $n\in T$. We will show that $\depth_k(I,N_n)=r$ for all $n$ large. Indeed, if $r=0$, then $\dim(\Hom(R/I,N_n))>k$ for all $n\in T$. Since $\Ass_R(\Hom(R/I,N_n))=\Ass_R(N_n)\cap V(I)$,  we get by Lemma \ref{L2} that the set $\Ass_R(\Hom(R/I,N_n))$ is stable for all $n\ge a$ for some $a\in T$. It follows that $\depth_k(I,N_n)=0$ for all $n\ge a$. If $r\ge 1$, then we get by Lemma \ref{L221} that $\depth_k(I,N_n)\ge r$ for all $n\ge v$ for some $v>0$. Therefore $\dim(\Ext^i_R(R/I,N_n))\le k$ for all $n\ge v$ and all $i<r.$ Assume that $\dim(\Ext^r_R(R/I,N_n))\le k$ for all $n$ in an infinite subset $S$ of $\Bbb Z$. Then there exists by Lemma \ref{L221}  an integer $b\ge v$ such that $\depth_k(I,N_n)\ge r+1$, and so $\dim(\Ext^r_R(R/I,N_n))\le k$  for all $n\ge b$ by Lemma \ref{LL0}. This contradicts the definition  of $r$. Thus there exists an enough large integer $c$ such that $\dim(\Ext^r_R(R/I,N_n))>k$ and $\dim(\Ext^i_R(R/I,N_n))\le k$ for all $n\ge c$ and $i<r$. Therefore by Lemma \ref{LL0} again we get $\depth_k(I,N_n)=r$ for all $n\ge c$ .
\end{proof}

In view of Theorem \ref{L3}, if $r=\depth_k(I,N_n)$ for all $n$ large, we call that  $r$ is  the  eventual value of $\depth_k(I,N_n)$. By Lemma \ref{L221} and the proof of Theorem \ref{L3}, we get immediately the following consequence.

\begin{corollary} Let $k\ge -1$ be an integer and $r$ the eventual value of $\depth_k(I,N_n)$. Then 
$$r=\inf\{i\mid\dim(\Ext^i_R(R/I,N_n))>k\text{ for infinitely many }n\}.$$
\end{corollary}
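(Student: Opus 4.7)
The plan is to derive this directly from Theorem \ref{L3} and Lemma \ref{LL0}. Write
$$r'=\inf\{i\mid\dim(\Ext^i_R(R/I,N_n))>k\text{ for infinitely many }n\}.$$
The goal is to show $r=r'$ by establishing both inequalities; the whole argument amounts to unpacking what ``eventual value'' means in terms of the Ext-characterization provided by Lemma \ref{LL0}.

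First I would record the content of the proof of Theorem \ref{L3} in the form actually needed here: there exists an integer $c\ge 1$ such that for every $n\ge c$,
\begin{itemize}
\item[(a)] $\dim(\Ext^i_R(R/I,N_n))\le k$ for all $i<r$, and
\item[(b)] $\dim(\Ext^r_R(R/I,N_n))>k$.
\end{itemize}
Both statements are exactly what is proved in the last paragraph of the proof of Theorem \ref{L3} (they are equivalent to $\depth_k(I,N_n)=r$ via Lemma \ref{LL0}).

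Then the inequality $r'\le r$ is immediate from (b): the set $\{n\mid\dim(\Ext^r_R(R/I,N_n))>k\}$ contains all integers $n\ge c$ and is therefore infinite, so $r$ belongs to the set whose infimum defines $r'$. Conversely, for $r\le r'$, suppose $i<r$; then by (a) the set $\{n\mid\dim(\Ext^i_R(R/I,N_n))>k\}$ is contained in $\{1,\ldots,c-1\}$ and is in particular finite, so $i$ is not in the set whose infimum defines $r'$. Hence every element of that set is at least $r$, giving $r'\ge r$.

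There is no real obstacle beyond invoking the already-proven Theorem \ref{L3}; the one point to watch is that the corollary's formulation uses ``$\dim>k$ for infinitely many $n$'' rather than ``for all large $n$,'' which is why the argument is framed as a two-sided inequality rather than a direct substitution. Since (a) and (b) give the stronger ``for all large $n$'' version, the weaker ``infinitely many'' version follows in one direction and is enough to exclude smaller indices in the other direction, so the two characterizations coincide.
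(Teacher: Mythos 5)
Your proof is correct and is essentially the paper's own argument: the authors simply state that the corollary follows immediately from Lemma \ref{L221} and the proof of Theorem \ref{L3}, and your write-up just makes the two-sided inequality explicit (extracting the facts that $\dim(\Ext^i_R(R/I,N_n))\le k$ for $i<r$ and $\dim(\Ext^r_R(R/I,N_n))>k$ for all large $n$, which are precisely what the last paragraph of the proof of Theorem \ref{L3} establishes). The only point you leave implicit is the $r=\infty$ case, where (b) is vacuous and (a) alone shows the defining set is empty, but that is a harmless omission.
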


\begin{lemma}\label{C22} Let $k\ge -1$ be an integer. Assume that $r$ and $s$ are the eventual values of $\depth_k(I,J^nM/J^{n+1}M)$ and $\depth_k(I,M/J^nM)$, respectively. Then we have $r\ge s$.
\end{lemma}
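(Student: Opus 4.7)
The plan is to deduce the inequality from the natural short exact sequence
$$0\lr J^nM/J^{n+1}M\lr M/J^{n+1}M\lr M/J^nM\lr 0$$
combined with the cohomological characterization $\depth_k(I,N)=\inf\{j\mid\dim(\Ext^j_R(R/I,N))>k\}$ supplied by Lemma \ref{LL0}.

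First, I would apply $\Hom_R(R/I,-)$ to the displayed sequence and record, for each $i\ge 0$, the three-term exact fragment
$$\Ext^{i-1}_R(R/I,M/J^nM)\lr\Ext^i_R(R/I,J^nM/J^{n+1}M)\lr\Ext^i_R(R/I,M/J^{n+1}M)$$
(with the convention $\Ext^{-1}=0$ when $i=0$). The standard fact that the support of the middle term is contained in the union of the supports of the outer two terms yields
$$\dim(\Ext^i_R(R/I,J^nM/J^{n+1}M))\le\max\bigl\{\dim(\Ext^{i-1}_R(R/I,M/J^nM)),\,\dim(\Ext^i_R(R/I,M/J^{n+1}M))\bigr\}.$$

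Next, fix any integer $i<s$. By Theorem \ref{L3} there is some $n_0$ with $\depth_k(I,M/J^nM)=s$ for all $n\ge n_0$, and Lemma \ref{LL0} then forces $\dim(\Ext^j_R(R/I,M/J^nM))\le k$ for every $j<s$ and every $n\ge n_0$. Since both $i$ and $i-1$ lie strictly below $s$, the displayed bound immediately gives $\dim(\Ext^i_R(R/I,J^nM/J^{n+1}M))\le k$ for all sufficiently large $n$. Because $i<s$ was arbitrary, Lemma \ref{LL0} applied to $J^nM/J^{n+1}M$ produces $\depth_k(I,J^nM/J^{n+1}M)\ge s$ for all large $n$, i.e.\ $r\ge s$. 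When $s=\infty$, the same reasoning is carried out for every $i\ge 0$ (one index at a time, so no uniform threshold is needed), giving $r=\infty$ as well.

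There is no real obstacle in this approach; the only mildly delicate step is the passage from exactness of the three-term fragment to the dimension inequality, which rests on the elementary observation that the middle module sits in a short exact sequence whose outer modules are a quotient of $\Ext^{i-1}_R(R/I,M/J^nM)$ and a submodule of $\Ext^i_R(R/I,M/J^{n+1}M)$.
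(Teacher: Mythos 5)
Your proof is correct and follows essentially the same route as the paper: both use the short exact sequence $0\to J^nM/J^{n+1}M\to M/J^{n+1}M\to M/J^nM\to 0$, pass to the long exact sequence of $\Ext^{\bullet}_R(R/I,-)$, bound the dimension of the middle term by the outer two, and invoke Lemma \ref{LL0}. The only cosmetic difference is that the paper records the intermediate estimate $r(n)\ge\min\{s(n)+1,s(n+1)\}$ before stabilizing $n$, whereas you stabilize $n$ first; the two are equivalent.
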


\begin{proof} Let $n>0$. Set 
$r(n)=\depth_k(I,J^nM/J^{n+1}M)\text{ and }s(n)=\depth_k(I,M/J^nM).$
By the short exact sequence 
$$0\sr J^nM/J^{n+1}M\sr M/J^{n+1}M\sr M/J^nM\sr 0,$$
we get the long exact sequence
$$
\begin{aligned}
\cdots\sr\Ext^{j-1}_R(R/I,M/J^nM)\sr\Ext^j_R&(R/I,J^nM/J^{n+1}M)\\
&\sr\Ext^j_R(R/I,M/J^{n+1}M)\sr\cdots.
\end{aligned}
$$
Therefore for any $j<\min\{s(n)+1,s(n+1)\}$ we get by Lemma \ref{LL0} that $$\dim(\Ext^j_R(R/I,J^nM/J^{n+1}M))\le k.$$
It implies that $r(n)\ge\min\{s(n)+1,s(n+1)\}$ by Lemma \ref{LL0}. Note that, by the hypothesis of $r$ and $s$, there exists an integer $a>0$ such that $r=r(n)$ and $s=s(n)$ for all $n\ge a$. It follows that $r\ge s$.
\end{proof}

Recall that a sequence $x_1,\ldots ,x_r\in I$ is called an {\it $I-$filter regular sequence} of $M$ if $x_i\notin\p$ for all $\p\in\Ass_R(M/(x_1,\ldots ,x_{i-1})M)\setminus V(I)$ and all $i=1,\ldots ,r$, where $V(I)$ is the set of all prime ideals of $R$ containing $I$. 

\begin{lemma}\label{L4} Let $k\geq -1$ be an integer and $r$  the eventual value of $\depth_k(I,N_n)$.
Assume that $1\leq r<\infty$. Then there is a sequence $x_1,\ldots ,x_r$ in $I$ which is at the same time a permutable $N_n-$sequence in dimension $> k$ and a permutable
$I-$filter regular sequence of $N_n$ for all large $n$.
\end{lemma}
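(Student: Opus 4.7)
My plan is to construct $x_1,\ldots,x_r\in I$ inductively by prime avoidance, using the stability results already established to ensure the choices work uniformly for all large $n$.

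I first record a stability principle: for any fixed finite sequence $y_1,\ldots,y_s\in R$, the set $\Ass_R(N_n/(y_1,\ldots,y_s)N_n)$ is stable for $n\gg 0$. In the case $N_n=M_n$ this follows by applying Lemma \ref{L2} to the finitely generated graded $\fR$-module $\fM/(y_1,\ldots,y_s)\fM$, whose $n$-th graded component is $N_n/(y_1,\ldots,y_s)N_n$. In the case $N_n=M/J^nM$, setting $\bar M=M/(y_1,\ldots,y_s)M$ gives $N_n/(y_1,\ldots,y_s)N_n\cong\bar M/J^n\bar M$, so Brodmann's theorem applied to $\bar M$ delivers the claim.

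I then induct on $i=0,1,\ldots,r$ to produce $x_1,\ldots,x_i\in I$ such that for all large $n$ the sequence is simultaneously a permutable $N_n$-sequence in dimension $>k$ and a permutable $I$-filter regular sequence of $N_n$. The base case $i=0$ is vacuous. For the step from $i-1$ to $i$, consider
$$\mathcal{P}=\bigcup_{T\subseteq\{x_1,\ldots,x_{i-1}\}}\bigl[\Ass_R(N_n/TN_n)_{>k}\cup(\Ass_R(N_n/TN_n)\setminus V(I))\bigr]$$
evaluated at one (hence any) sufficiently large $n$. By the stability principle and the finiteness of the index set this is a finite collection of primes, independent of the choice of large $n$. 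Using the inductive permutability together with Theorem \ref{L3}, one obtains $\depth_k(I,N_n/TN_n)=r-|T|\ge r-(i-1)\ge 1$ for each such $T$, so no prime in $\Ass_R(N_n/TN_n)_{>k}$ contains $I$, while primes in $\Ass_R(N_n/TN_n)\setminus V(I)$ miss $I$ by definition. Prime avoidance therefore produces $x_i\in I\setminus\bigcup_{\p\in\mathcal{P}}\p$.

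The main obstacle is to justify that the enlarged sequence $x_1,\ldots,x_i$ is itself permutable in both senses, since the construction only directly ensures that $x_i$ behaves correctly as the \emph{last} element after any ordering of $x_1,\ldots,x_{i-1}$. To complete the inductive step I would appeal to the local ring hypothesis: because every $x_l$ lies in $\m$, a Nakayama-type argument on the dimension $\le k$ torsion submodule (respectively on the $I$-torsion submodule $\Gamma_I$) shows that any $N_n$-sequence in dimension $>k$ of elements of $\m$ is automatically permutable, and similarly for $I$-filter regular sequences; this is the higher-codimension analogue of the classical fact that regular sequences in a local ring are permutable. Finally, because the relevant $\Ass_R(N_n/TN_n)$ are constant for all $n\gg 0$, the same $x_1,\ldots,x_r$ serve simultaneously for every sufficiently large $n$.
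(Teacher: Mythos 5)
Your construction of $x_1,\ldots,x_r$ by inductive prime avoidance over the unions
$$\bigcup_{T\subseteq\{x_1,\ldots,x_{i-1}\}}\bigl[\Ass_R(N_n/TN_n)_{>k}\cup(\Ass_R(N_n/TN_n)\setminus V(I))\bigr]$$
is exactly the construction in the paper's proof of Lemma~\ref{L4}, and your preliminary stability observation for $\Ass_R(N_n/(y_1,\ldots,y_s)N_n)$ is correct. The gap is in the final step, where you justify permutability. You assert, as a ``higher-codimension analogue of the classical fact that regular sequences in a local ring are permutable,'' that any $N_n$-sequence in dimension $>k$ of elements of $\m$ is automatically permutable (and likewise for $I$-filter regular sequences). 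This is \emph{false} for $k\ge 0$. For example, take $R=k[[x,y]]$ and $M=R\oplus R/(x)$, so that $\Ass_R(M)=\{(0),(x)\}$, both of dimension $>0$. Then $y,x$ is an $M$-sequence in dimension $>0$: indeed $y$ avoids $\{(0),(x)\}$, and $\Ass_R(M/yM)_{>0}=\{(y)\}$, which $x$ avoids. But $x,y$ is not, since $x\in(x)\in\Ass_R(M)_{>0}$. The classical Nakayama argument fails here because after localizing at an associated prime $\p$ of dimension $>k$ one needs the other element of the sequence to lie in $\p R_\p$, and an arbitrary element of $\m$ need not lie in $\p$.

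What actually makes your $x_1,\ldots,x_l$ permutable is the \emph{full} avoidance condition you impose: because $x_l$ also avoids $\Ass_R(N_n/\Sigma_{j\in\Lambda}x_jN_n)_{>k}$ for every $\Lambda\subseteq\{1,\ldots,l-1\}$ (in particular $\Lambda=\emptyset$, so $x_l$ avoids $\Ass_R(N_n)_{>k}$), one can localize at any $\p$ with $\dim(R/\p)>k$ and observe that the relevant elements of the sequence become genuine regular sequences on $(N_n)_\p$, which \emph{are} permutable over $R_\p$. This is the content of the argument the paper refers to, \cite[Proposition~2.5]{BN}, and it is not a consequence of a purely local-ring fact about arbitrary $M$-sequences in dimension $>k$. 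So your construction coincides with the paper's, but the appeal to a general automatic-permutability principle is a genuine gap; it must be replaced by the localization argument that uses the extra avoidance built into the construction.
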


\begin{proof} Let $l$ be an integer that such $1\leq l\leq r$. We will show by induction on $l$ the existence of a sequence of $l$ elements in $I$ satisfying the conclusion of the lemma. If $l=1$, then by Lemma \ref{L2} we may assume that $\Ass_R(N_n)$ is stable for all $n\ge t$ for some integer $t>0$. Since $r\geq 1$, $I\nsubseteq\p$
for all $\p\in\Ass_R(N_n)_{> k}$ and all $n\ge t$. From this, by the Prime Avoidance Theorem, $I\nsubseteq\p$ for all $\p\in \Ass_R(N_n)_{> k}\cup(\Ass_R(N_n)\setminus V(I))$ and all $n\ge t$. Therefore, there is an
 element $x_1\in I$ satisfying our lemma.
\medskip

Assume that $1<l\leq r$ and there already exists a sequence $x_1,\ldots,x_{l-1}\in I$ which is at the same time a permutable $N_n-$sequence in dimension $>k$ and a permutable $I-$filter regular sequence of $N_n$ for all $n\ge t$ for some integer $t>0$. Let $\Omega$ be the set of all subsets of $\{1,\ldots,l-1\}$. 
Hence, since $\Omega$ is finite, we get by Lemma \ref{L2} that the set 
$$T=\bigcup_{\Lambda\in\ \Omega}\big(\Ass_R(N_n/\Sigma_{j\in\Lambda}x_jN_n)_{> k}\cup(\Ass_R(N_n/\Sigma_{j\in\Lambda}x_jN_n)\setminus V(I))\big)$$
is finite and stable for all $n$ large. Hence there exists an
element $x_l\in I$ such that $x_l\notin\p$ for all $\p\in T$
by the Prime Avoidance Theorem. Then with the same method that used  in the proof of Brodmann-Nhan in \cite[Proposition 2.5]{BN}, we can show that $x_1,\ldots ,x_l$ is at the same time a permutable $N_n-$sequence in dimension $> k$ and a permutable
$I-$filter regular sequence of $N_n$ for all large $n$.
\end{proof}

Finally, we recall the following fact which will be used in the sequel.

\begin{lemma}\label{LL1} {\rm (see \cite[1.2]{KhS}, \cite[3.4]{NS})}
If $x_1,\ldots,x_r$ is an $I-$filter regular sequence of $M$ then we have
$$
H^j_I(M)=
\begin{cases}
 H^j_{(x_1,\ldots,x_r)}(M)& \text{ if } j<r\\
 H^{j-r}_I(H^r_{(x_1,\ldots,x_r)}(M))& \text{ if } j\geq r.
\end{cases}
$$
\end{lemma}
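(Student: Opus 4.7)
The plan is to invoke the Grothendieck composition spectral sequence. Since $(x_1,\ldots,x_r)\subseteq I$, one has $\Gamma_I=\Gamma_I\circ\Gamma_{(x_1,\ldots,x_r)}$, and because $\Gamma_{(x_1,\ldots,x_r)}$ preserves injectives over a Noetherian ring, there is a convergent spectral sequence
$$E_2^{p,q}=H^p_I\bigl(H^q_{(x_1,\ldots,x_r)}(M)\bigr)\Longrightarrow H^{p+q}_I(M).$$
The entire proof will be extracted from the degeneration of this spectral sequence once its $E_2$-page is understood.

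The crux is to show that $H^q_{(x_1,\ldots,x_r)}(M)$ is $I$-torsion for every $q<r$. For $\p\notin V(I)$, the $I$-filter regular hypothesis gives $x_i\notin \p$ whenever $\p\in\Ass_R(M/(x_1,\ldots,x_{i-1})M)$, so after localizing at $\p$ the sequence $x_1,\ldots,x_r$ becomes $M_\p$-regular; this forces $H^q_{(x_1,\ldots,x_r)}(M)_\p=0$ for $q<r$. Hence $\Supp H^q_{(x_1,\ldots,x_r)}(M)\subseteq V(I)$, and since every local cohomology class generates a finitely generated submodule, some power of $I$ annihilates it, yielding the required $I$-torsion.

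This kills all terms $E_2^{p,q}$ with $p\ge 1$ and $q<r$. Combined with $H^q_{(x_1,\ldots,x_r)}(M)=0$ for $q>r$, the only possibly nonzero entries on the total-degree-$j$ diagonal are $E_2^{0,j}$ when $j<r$ and $E_2^{j-r,r}$ when $j\ge r$. A bookkeeping check that every differential $d_k$ landing in or departing from these surviving spots has its source or target in the zero region then yields $E_\infty^{0,j}=E_2^{0,j}=H^j_{(x_1,\ldots,x_r)}(M)$ for $j<r$, and $E_\infty^{j-r,r}=E_2^{j-r,r}=H^{j-r}_I\bigl(H^r_{(x_1,\ldots,x_r)}(M)\bigr)$ for $j\ge r$, giving the stated isomorphisms.

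The main obstacle is the $I$-torsion statement for $H^q_{(x_1,\ldots,x_r)}(M)$ with $q<r$; once that is available the spectral sequence collapse is purely combinatorial. An inductive proof on $r$ using short exact sequences coming from $0\to (0:_Mx_1^n)\to M\to x_1^nM\to 0$ would be an alternative route, but the spectral sequence package packages both asserted isomorphisms uniformly and avoids the awkward book-keeping of the induction.
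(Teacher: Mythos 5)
The paper does not prove this lemma; it is quoted as a known result from \cite[1.2]{KhS} and \cite[3.4]{NS}, and the latter is precisely the Grothendieck composition spectral sequence argument you give. Your proof is correct: the crux you identify---that $H^q_{(x_1,\ldots,x_r)}(M)$ is $I$-torsion for $q<r$, obtained by localizing at $\p\notin V(I)$ and using that the filter-regular hypothesis makes $x_1,\ldots,x_r$ an $M_\p$-regular sequence there---collapses the $E_2$-page to the column $p=0,\ q<r$ and the row $q=r$, and the bookkeeping on differentials then yields both stated isomorphisms.
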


\section{Proof of Theorem \ref{MTh}, (i)}
 In the rest of this paper, instead of $\depth_{-1}(I,M)$, $\depth_0(I,M)$ and $\depth_1(I,M)$ we denote,  by tradition,  by $\depth(I,M)$,   $\f-depth(I,M)$ and $\gdepth(I,M)$, respectively.
We first recall a known fact on local cohomology modules.

\begin{lemma}\label{L31} {\rm (cf. \cite[Theorem 2.4]{CH})}  Let $r=\depth(I, M)$. Assume that $1\le r<\infty$ and $x_1,\ldots,x_r$ a regular sequence of $M$ in $I$. Then we have
$$\Ass_R(H^r_I(M))=\Ass_R(M/(x_1,\ldots,x_r)M)\cap V(I).$$
\end{lemma}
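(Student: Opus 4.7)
My plan is to reduce $\Ass_R(H^r_I(M))$ to the associated primes of $\Hom_R(R/I, M/JM)$, where $J=(x_1,\ldots,x_r)$, via three successive canonical isomorphisms; the computation then becomes elementary.

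First I would observe that since $H^r_I(M)$ is $I$-torsion, every associated prime of it contains $I$, so the standard identity $\Ass_R(H^r_I(M))=\Ass_R\bigl(\Hom_R(R/I,H^r_I(M))\bigr)$ holds. To convert this Hom into an Ext, I would invoke the Grothendieck spectral sequence
$$E_2^{p,q}=\Ext^p_R(R/I,H^q_I(M))\Rightarrow \Ext^{p+q}_R(R/I,M)$$
arising from the factorization $\Hom_R(R/I,-)=\Hom_R(R/I,\Gamma_I(-))$ together with the fact that $\Gamma_I$ sends injectives to $\Hom_R(R/I,-)$-acyclics. The hypothesis $r=\depth(I,M)$ kills all rows $q<r$, so on the diagonal $p+q=r$ only $E_2^{0,r}$ survives, producing the edge isomorphism $\Hom_R(R/I,H^r_I(M))\cong\Ext^r_R(R/I,M)$.

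Next I would exploit the regular sequence. Since $x_1,\ldots,x_r$ is $M$-regular, the Koszul complex on these elements is a free resolution of $R/J$, so $\Ext^i_R(R/J,M)=0$ for $i\ne r$ and $\Ext^r_R(R/J,M)\cong M/JM$. Because $J\subseteq I$, $R/I$ carries a natural $R/J$-module structure, and the change-of-rings spectral sequence
$$\Ext^p_{R/J}\bigl(R/I,\Ext^q_R(R/J,M)\bigr)\Rightarrow \Ext^{p+q}_R(R/I,M)$$
collapses to the single column $q=r$, yielding $\Ext^r_R(R/I,M)\cong \Hom_{R/J}(R/I,M/JM)=\Hom_R(R/I,M/JM)$.

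Finally, $\Hom_R(R/I,M/JM)=(0:_{M/JM}I)$, and one checks directly that $\Ass_R\bigl((0:_N I)\bigr)=\Ass_R(N)\cap V(I)$ for any $R$-module $N$: any associated prime of the submodule $(0:_N I)$ is an associated prime of $N$ and contains $I$; conversely, any $\p=\ann_R(m)\in\Ass_R(N)\cap V(I)$ satisfies $Im=0$, so $m\in(0:_N I)$ and $\p\in\Ass_R((0:_N I))$. Chaining the three isomorphisms then gives the claimed equality. The main obstacle I foresee is justifying the collapse of the two spectral sequences cleanly; to bypass them I could alternatively use Lemma \ref{LL1}, noting that an $M$-regular sequence in $I$ is automatically $I$-filter regular, to write $H^r_I(M)=\Gamma_I(H^r_J(M))$, and then analyze the direct limit $H^r_J(M)=\varinjlim_n M/(x_1^n,\ldots,x_r^n)M$, whose transition maps are injective by regularity.
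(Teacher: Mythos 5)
There is a genuine gap in the middle step: you claim that since $x_1,\ldots,x_r$ is $M$-regular, the Koszul complex on $x_1,\ldots,x_r$ is a free resolution of $R/J$. That would require $x_1,\ldots,x_r$ to be an $R$-regular sequence, which is not assumed; $M$-regularity does not imply it (an $M$-regular element can perfectly well be a zerodivisor on $R$). As written, the identification of $\Ext^q_R(R/J,M)$ with Koszul cohomology, and hence the collapse of your change-of-rings spectral sequence, is unjustified. The facts you actually need, namely $\Ext^i_R(R/J,M)=0$ for $i<r$ and $\Ext^r_R(R/J,M)\cong M/JM$, are nevertheless true under $M$-regularity alone: they follow from the Rees-type dimension-shifting induction using $0\to M\xrightarrow{\,x_1\,}M\to M/x_1M\to 0$ together with the observation that $x_1\in J$ annihilates $\Ext^\bullet_R(R/J,-)$, which splits the long exact sequence into short exact sequences $0\to\Ext^i_R(R/J,M)\to\Ext^i_R(R/J,M/x_1M)\to\Ext^{i+1}_R(R/J,M)\to 0$. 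In fact, running this same induction directly against $R/I$ (still using $x_i\in I$) gives $\Ext^r_R(R/I,M)\cong\Hom_R(R/I,M/JM)$ in one stroke, so the change-of-rings spectral sequence is superfluous; you only need the first (Grothendieck) spectral sequence and the elementary computation of $\Ass_R\bigl((0:_N I)\bigr)$, both of which you carried out correctly.

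Your proposed bypass via Lemma \ref{LL1} is the cleaner route and is closer to the toolkit the paper actually uses elsewhere (the same combination of Lemma \ref{LL1} and \cite[Proposition 2.6]{BN} appears in the proof of Theorem \ref{T42}), but you left one step implicit. From $H^r_I(M)\cong\Gamma_I(H^r_J(M))$ and $\Ass_R(\Gamma_I(N))=\Ass_R(N)\cap V(I)$, together with the injectivity of the transition maps in $H^r_J(M)=\varinjlim_n M/(x_1^n,\ldots,x_r^n)M$, one gets $\Ass_R(H^r_I(M))=\bigl(\bigcup_{n\ge 1}\Ass_R(M/(x_1^n,\ldots,x_r^n)M)\bigr)\cap V(I)$; the remaining ingredient, that $\Ass_R(M/(x_1^n,\ldots,x_r^n)M)=\Ass_R(M/(x_1,\ldots,x_r)M)$ for all $n\ge 1$, is exactly \cite[Proposition 2.6]{BN} and needs to be invoked. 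Note finally that the paper itself does not prove this lemma but cites it from \cite[Theorem 2.4]{CH}, so there is no in-paper proof to compare against.
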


The  statement (i) of Theorem \ref{MTh} is then an immediate consequence of following theorems.
\medskip

\begin{theorem}\label{KL} Let $r$ be the eventual value of $\depth(I,M_n)$. Then the set $\Ass_R(H^r_I(M_n))$ is stable for large $n$.
\end{theorem}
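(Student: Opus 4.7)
The plan is to reduce the stability statement to Brodmann's classical theorem (Lemma \ref{L2}) by using a carefully chosen regular sequence that works uniformly for all large $n$, and then applying the Cuong--Hoang type description of associated primes of the top local cohomology module (Lemma \ref{L31}).

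First I would dispose of the boundary cases. If $r=\infty$, then $IM_n=M_n$ for all large $n$ (by Remark \ref{R1}(ii) with $k=-1$), so $H^r_I(M_n)=0$ and the conclusion is trivial. If $r=0$, then $H^0_I(M_n)=\Gamma_I(M_n)$ and $\Ass_R(\Gamma_I(M_n))=\Ass_R(M_n)\cap V(I)$, which is stable for large $n$ directly by Lemma \ref{L2}. So assume $1\le r<\infty$.

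Apply Lemma \ref{L4} with $k=-1$: this yields a sequence $x_1,\ldots,x_r\in I$ which is a regular sequence on $M_n$ (i.e.\ an $M_n$-sequence in dimension $>-1$) for all $n\ge n_0$, for some $n_0$. For every such $n$, the eventual depth equals $r=\depth(I,M_n)$, so Lemma \ref{L31} gives
\[
\Ass_R\!\bigl(H^r_I(M_n)\bigr)=\Ass_R\!\bigl(M_n/(x_1,\ldots,x_r)M_n\bigr)\cap V(I)\qquad(n\ge n_0).
\]
Thus it suffices to show that the right-hand side is eventually constant.

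Now observe that since $x_1,\ldots,x_r$ lie in $R=R_0$, the submodule $(x_1,\ldots,x_r)\fM$ of $\fM$ is graded, so the quotient $\fM/(x_1,\ldots,x_r)\fM$ is a finitely generated graded $\fR$-module whose $n$-th homogeneous component is exactly $M_n/(x_1,\ldots,x_r)M_n$. Applying Brodmann's theorem (Lemma \ref{L2}) to this graded module, the set $\Ass_R\!\bigl(M_n/(x_1,\ldots,x_r)M_n\bigr)$ is stable for large $n$, and therefore so is its intersection with $V(I)$. Combining with the displayed identity, $\Ass_R(H^r_I(M_n))$ is stable for large $n$, which is the claim.

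The main technical point — and the only step that requires care — is producing a \emph{single} sequence $x_1,\ldots,x_r$ that is simultaneously regular on all $M_n$ for $n\gg 0$; this is exactly what Lemma \ref{L4} provides, so the argument compresses to the three observations above.
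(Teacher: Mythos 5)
Your proof is correct and follows essentially the same route as the paper's: handle $r=\infty$ and $r=0$ trivially, use Lemma \ref{L4} (with $k=-1$) to obtain a single regular sequence $x_1,\ldots,x_r\in I$ on $M_n$ for all large $n$, invoke Lemma \ref{L31} to identify $\Ass_R(H^r_I(M_n))$ with $\Ass_R(M_n/(x_1,\ldots,x_r)M_n)\cap V(I)$, and conclude by Brodmann stability for the graded module $\fM/(x_1,\ldots,x_r)\fM$. The only difference is that you make explicit the observation that $\fM/(x_1,\ldots,x_r)\fM$ is a finitely generated graded $\fR$-module, which the paper leaves implicit when citing Lemma \ref{L2}.
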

\begin{proof} If $r=\infty$ then $\Ass_R(H^r_I(M_n))=\emptyset$ for all large $n$. If $r=0$ then $\Ass_R(H^0_I(M_n))=\Ass_R(M_n)\cap V(I)$ is stable for large $n$ by Lemma \ref{L2}. It remains to consider the case $1\le r<\infty$. Then we get from Theorem \ref{L3}, Lemma \ref{L2} and Lemma \ref{L4} that there exists a positive integer $a$ such that for all $n\ge a$ the following are true: (i) $r=\depth(I,M_n)$;  (ii) there exists a sequence $x_1,\ldots,x_r\in I$ which is a regular sequence of $M_n$; and (iii) the set  $\Ass_R(M_n/(x_1,\ldots,x_r)M_n)$ is independent of  $n$.  Therefore, we get by Lemma \ref{L31} that 
$$\Ass_R(H^r_I(M_n))=\Ass_R(M_n/(x_1,\ldots,x_r)M_n)\cap V(I)$$
for all $n\ge a$, and so that $\Ass_R(H^r_I(M_n))$ is stable for large $n$.  
\end{proof}

\begin{theorem}\label{T41} Let $s$ be the eventual value of $\depth(I,M/J^nM)$. Then the set $\qquad$
$\Ass_R(H^s_I(M/J^nM))$ is stable for large $n$.
\end{theorem}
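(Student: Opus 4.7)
The strategy is to imitate the proof of Theorem \ref{KL}, replacing the graded piece $M_n$ with the module $M/J^n M$ and identifying the correct auxiliary module to which Brodmann's stability theorem (Lemma \ref{L2}) should be applied.

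First I would dispose of the boundary cases. If $s=\infty$, then eventually $M/J^n M = I(M/J^n M)$, which together with Nakayama forces the statement to be vacuous. If $s=0$, then
$$\Ass_R(H^0_I(M/J^n M)) = \Ass_R(M/J^n M) \cap V(I),$$
which is stable for large $n$ by Lemma \ref{L2} applied with $N_n = M/J^n M$.

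In the main case $1 \le s < \infty$, Theorem \ref{L3} combined with Lemma \ref{L4} (taken at $k=-1$) produces elements $x_1,\ldots,x_s \in I$ and an integer $a$ such that for every $n \ge a$ the sequence $x_1,\ldots,x_s$ is a regular sequence of $M/J^n M$ and $s = \depth(I, M/J^n M)$. Lemma \ref{L31} then gives
$$\Ass_R(H^s_I(M/J^n M)) = \Ass_R\bigl((M/J^n M)/(x_1,\ldots,x_s)(M/J^n M)\bigr) \cap V(I)$$
for all $n \ge a$, so it suffices to prove that the right-hand side stabilizes in $n$.

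The decisive observation is the canonical isomorphism
$$(M/J^n M)/(x_1,\ldots,x_s)(M/J^n M) \;\cong\; \bigl(M/(x_1,\ldots,x_s)M\bigr)\big/J^n\bigl(M/(x_1,\ldots,x_s)M\bigr),$$
both sides being naturally identified with $M/((x_1,\ldots,x_s)M + J^n M)$. Since $M/(x_1,\ldots,x_s)M$ is a finitely generated $R$-module, applying Brodmann's theorem (Lemma \ref{L2}) to it with respect to the ideal $J$ shows that $\Ass_R(M/((x_1,\ldots,x_s)M + J^n M))$ is stable for large $n$, and intersecting with the fixed set $V(I)$ preserves this stability. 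The only subtle point is the identification above together with the choice to apply Brodmann's theorem to the auxiliary module $M/(x_1,\ldots,x_s)M$ rather than to $M/J^n M$ directly; once this is in hand, the argument is structurally identical to that of Theorem \ref{KL}.
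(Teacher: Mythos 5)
Your proof is correct, but it takes a genuinely different route from the paper, and in fact a cleaner one. The paper proves Theorem \ref{T41} by comparison with the graded-piece case: it starts from the short exact sequence $0\to J^nM/J^{n+1}M\to M/J^{n+1}M\to M/J^nM\to 0$, takes the long exact sequence in local cohomology, invokes Lemma \ref{C22} to compare the eventual value $r$ of $\depth(I,J^nM/J^{n+1}M)$ with $s$, feeds Theorem \ref{KL} in as a black box, and then splits into three cases $r-2\ge s$, $r-1=s$, $r=s$. You instead replay the proof of Theorem \ref{KL} \emph{directly} on $M/J^nM$: Lemma \ref{L4} at $k=-1$ produces a single sequence $x_1,\dots,x_s\in I$ that is $M/J^nM$-regular for all large $n$, Lemma \ref{L31} converts $\Ass_R(H^s_I(M/J^nM))$ into $\Ass_R\bigl((M/J^nM)/(x_1,\dots,x_s)(M/J^nM)\bigr)\cap V(I)$, and the decisive point is the canonical identification $(M/J^nM)/(x_1,\dots,x_s)(M/J^nM)\cong \overline M/J^n\overline M$ with $\overline M=M/(x_1,\dots,x_s)M$ fixed, so that Brodmann's theorem (Lemma \ref{L2}) applied to $\overline M$ finishes the job. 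This avoids Theorem \ref{KL}, Lemma \ref{C22}, and the case split entirely, and it makes transparent that the $M_n$ and $M/J^nM$ halves of Theorem \ref{MTh}(i) admit essentially a single uniform argument; what the paper's route buys, by contrast, is a template (the SES plus long-exact-sequence comparison) that it then reuses, with the maximal ideal or a finite set adjoined, in the proofs of Theorems \ref{T43} and \ref{T53}, where a direct reduction via a fixed $\overline M$ is no longer available.

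One small imprecision worth fixing: in the case $s=\infty$, the cleanest justification is that $\depth(I,M/J^nM)=\infty$ means $(M/J^nM)=I(M/J^nM)$ by Remark \ref{R1}(ii), hence $\Supp(M/J^nM)\cap V(I)=\emptyset$ and therefore $H^j_I(M/J^nM)=0$ for all $j$, so the associated-prime set in question is empty for all large $n$; invoking Nakayama alone, as you phrase it, only treats the case $I\subseteq\m$ and does not by itself yield the vanishing of the local cohomology.
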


\begin{proof} Let $r$ be the eventual value of $\depth(I,J^nM/J^{n+1}M)$. Then $r \ge s$ by Lemma \ref{C22}. Moreover, we can choose an integer $a>0$ such that $r=\depth(I,M/J^nM)$ and $s=\depth(I,J^nM/J^{n+1}M)$ for all $n\ge a$. From the short exact sequence
$$0\sr J^nM/J^{n+1}M\sr M/J^{n+1}M\sr M/J^nM\sr 0,$$ we
get the following long exact sequence
$$
\begin{aligned}
\cdots\sr H^{j-1}_I(M/J^nM)\sr H^j_I(J^n&M/J^{n+1}M)\sr\\
&\sr H^j_I(M/J^{n+1}M)\sr H^j_I(M/J^nM)\sr\cdots
\end{aligned}
$$
for all $n>0$. We  consider three cases as follows.
\medskip

\noindent{\it Case 1}:  $r-2\ge s$. By the long exact sequence, we have the following isomorphisms
$$H^{s}_I(M/J^{n+1}M)\cong H^{s}_I(M/J^nM)$$
 for all $n\geq a$. Thus
$\Ass_R(H^{s}_I(M/J^nM))$ is stable for large $n$.
\medskip

\noindent{\it Case 2}: $r-1=s$. By the long
exact sequence above, we get the  exact sequence
$$0\sr H^{s}_I(M/J^{n+1}M)\sr H^{s}_I(M/J^nM)\sr H^{s+1}_I(J^nM/J^{n+1}M)$$
for all $n\ge a$. So $\Ass_R(H^{s}_I(M/J^{n+1}M))\subseteq\Ass_R(H^{s}_I(M/J^nM))$
for all $n\ge a$. It follows that 
$\Ass_R(H^{s}_I(M/J^nM))$ is stable for large $n$, since
$\Ass_R(H^{s}_I(M/J^aM))$ is a finite set by Lemma \ref{L31}.
\medskip

\noindent{\it Case 3}:  $r=s$. By the long exact sequence, we have the following exact sequence 
$$0\sr H^{s}_I(J^nM/J^{n+1}M)\sr H^{s}_I(M/J^{n+1}M)\sr H^{s}_I(M/J^nM)$$
for all $n\ge a$. There exists by Theorem \ref{KL} an integer $b\ge a$ such that 
$$\Ass_R(H^{s}_I(J^nM/J^{n+1}M)=\Ass_R(H^{s}_I(J^bM/J^{b+1}M)$$
for all $n\ge b$. Set $X=\Ass_R(H^s_I(J^bM/J^{b+1}M))$, then we get by the previous  exact sequence that 
$$X\subseteq \Ass_R(H_I^{s}(M/J^{n+1}M)) \subseteq
\Ass_R (H_I^{s}(M/J^nM))\cup X$$ for all $n\ge b$. Hence, for any $n\ge b$, we have
$$\Ass_R (H_I^{s}(M/J^{n+2}M))\subseteq \Ass_R
(H_I^{s}(M/J^{n+1}M))\cup X=\Ass_R(H_I^{s}(M/J^{n+1}M)).$$ 
Since $\Ass_R(H_I^s(M/J^{b+1}M))$ is finite by Lemma \ref{L31}, it follows that $\Ass_R(H_I^s(M/J^nM))$ is stable for large $n$, this finishes the proof of the theorem.
\end{proof}

\section{Proof of Theorem \ref{MTh}, (ii)}

We first need a characterization of filter depth by the Artinianness of local cohomology modules  as follows.

\begin{lemma}\label{L41} {\rm (\cite[Theorem 3.1]{Me1})} 
$\f-depth(I, M)=\inf\{ i\mid H^i_I(M)\text{ is not Artinian }\}.$
\end{lemma}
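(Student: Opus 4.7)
The plan is to reduce the lemma to two ingredients, one combinatorial and one module-theoretic, that together force the two infima to agree.

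First I would translate the left-hand side into a statement about supports of local cohomology. Since $\f-depth(I,M)=\depth_0(I,M)$, the Brodmann--Nhan description used in the proof of Lemma \ref{LL0} gives
$$\f-depth(I,M)=\inf\bigl\{i\mid \exists\,\p\in\Supp(H^i_I(M)),\ \dim(R/\p)>0\bigr\}.$$
Because $(R,\m)$ is local, $\dim(R/\p)>0$ is equivalent to $\p\neq\m$, so this infimum equals $\inf\{i\mid \Supp(H^i_I(M))\not\subseteq\{\m\}\}$. Thus the lemma is reduced to the module-theoretic claim that, for each $i$, $H^i_I(M)$ is Artinian if and only if $\Supp(H^i_I(M))\subseteq\{\m\}$.

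The forward direction is standard: any Artinian $R$-module $N$ satisfies $\Ass(N)\subseteq\{\m\}$ by the descending chain condition, and for any $x\in N$ the chain $\{\m^n x\}$ must stabilize, so $x$ is annihilated by a power of $\m$; hence $\Supp(N)\subseteq\{\m\}$. For the reverse direction, the key observation is that $H^i_I(M)$ is always $I$-torsion, and the assumption $\Supp(H^i_I(M))\subseteq\{\m\}$ forces it to be $\m$-torsion as well (each cyclic submodule has support a closed subset of $\{\m\}$). I would then invoke Melkersson's Artinianness criterion: an $\m$-torsion $R$-module $N$ is Artinian iff $(0:_N\m)$ has finite length. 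To verify this for $N=H^i_I(M)$, I would pick an $I$-filter regular sequence $x_1,\ldots,x_i\in I$ (possible because $\Supp(H^j_I(M))\subseteq\{\m\}$ forces $i\leq\f-depth(I,M)$ in the relevant range), apply Lemma \ref{LL1} to rewrite $H^i_I(M)$ in terms of cohomology of the Koszul/$\check{\rm C}$ech complex of $(x_1,\ldots,x_i)$, and use the resulting finite presentation to see that $(0:_{H^i_I(M)}\m)$ is finitely generated.

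The main obstacle is the Artinianness criterion itself: crossing the bridge from support-theoretic to structural information is exactly the content of Melkersson's theorem, and this is where one genuinely uses that $H^i_I(M)$ is not an arbitrary $\m$-torsion module but is built from the finitely generated $M$ via a derived functor. Everything else is just repackaging the Brodmann--Nhan characterization of $\depth_0$.
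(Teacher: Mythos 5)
The paper does not prove this lemma at all; it simply cites Melkersson \cite[Theorem 3.1]{Me1} (the result is essentially due to L\"u--Tang \cite{LT}), so there is no in-paper proof to compare against. Your outline—translate $\f-depth$ into the Brodmann--Nhan support characterization, observe that ``Artinian $\Rightarrow$ support in $\{\m\}$'' gives one inequality, then try to prove the converse—is a sensible skeleton, but you hand-wave at exactly the point where the real content of the theorem lives. Note also that your claimed reduction is too strong: it is \emph{not} true that for every single $i$, $\Supp(H^i_I(M))\subseteq\{\m\}$ forces $H^i_I(M)$ to be Artinian. What is true, and what the lemma actually requires, is that if $\Supp(H^j_I(M))\subseteq\{\m\}$ for \emph{all} $j\le i$ (equivalently $i<\f-depth(I,M)$), then $H^i_I(M)$ is Artinian. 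Your argument in fact only addresses this range (you need the condition on all lower $j$ in order to have a filter regular sequence of length $i$), so the mis-statement is partly a wording issue, but it should be corrected, since the ``for each $i$'' claim is not something one can hope to prove.

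The genuine gap is the step ``apply Lemma \ref{LL1} \ldots and use the resulting finite presentation to see that $(0:_{H^i_I(M)}\m)$ is finitely generated.'' Lemma \ref{LL1} identifies $H^j_I(M)$ with $H^j_{(x_1,\ldots,x_r)}(M)$ for $j<r$, but the \v{C}ech complex $C^\bullet(x_1,\ldots,x_r;M)$ is built from localizations $M_{x_{i_1}\cdots x_{i_t}}$, which are not finitely generated, so it furnishes no ``finite presentation'' of $H^j_I(M)$ from which finiteness of the socle could be read off. If instead one writes $H^j_{\underline x}(M)=\varinjlim_n H^j(K^\bullet(x_1^n,\ldots,x_r^n;M))$, each Koszul cohomology term is finitely generated and (for $\underline x$ filter regular and $j<r$) of finite length, but a direct limit of finite-length modules need not be Artinian, so this too does not conclude. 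Verifying Melkersson's criterion here requires an actual inductive argument on $j$ using the long exact sequences induced by a filter regular element $x\in I$ (using that $(0:_Mx)$ has finite length, so that $H^j_I(M)\cong H^j_I(M/(0:_Mx))$ for $j\ge1$, and that $(0:_{H^j_I(M)}x)$ is a homomorphic image of $H^{j-1}_I(M/xM)$), or equivalently an analysis of the spectral sequence $H^p_\m(H^q_I(M))\Rightarrow H^{p+q}_\m(M)$ using the Artinianness of the lower $H^q_I(M)$. That induction is the theorem; it does not fall out of Lemma \ref{LL1} alone.
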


Now, we apply Theorems \ref{KL} and \ref{T41} to prove the statement (ii) of Theorem \ref{MTh}. We will do this in the two following theorems corresponding to $N_n=M_n$ in the graded case $\fM=\oplus_{n\ge 0}M_n$ and $N_n= M/J^nM$.

\begin{theorem}\label{T42} Let $r_0$ be the eventual value of $\f-depth(I,M_n)$. Then the set 
$\qquad$
$\underset {j\le r_0}\bigcup\Ass_R(H^j_I(M_n))$ is stable for large $n$.
\end{theorem}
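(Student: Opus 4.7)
The plan is to isolate the contribution of the maximal ideal $\m$ from the other associated primes. For $n$ large one has $r_0=\f-depth(I,M_n)$ by Theorem \ref{L3}, and Lemma \ref{L41} then gives that $H^j_I(M_n)$ is Artinian for every $j<r_0$. Over the local ring $R$ this forces $\Ass_R(H^j_I(M_n))\subseteq\{\m\}$ for all such $j$ and $n$, so the union differs from $\Ass_R(H^{r_0}_I(M_n))$ only in whether $\m$ is included. Stability therefore reduces to (a) stability of $\Ass_R(H^{r_0}_I(M_n))\setminus\{\m\}$ and (b) stability of the indicator that $\m\in\bigcup_{j\le r_0}\Ass_R(H^j_I(M_n))$.

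For (a), I apply Lemma \ref{L4} to produce $x_1,\ldots,x_{r_0}\in I$ that is simultaneously an $M_n$-sequence in dimension $>0$ and an $I$-filter regular sequence of $M_n$ for all $n\ge a$. For each prime $\p\ne\m$ the dimension-${>}0$ condition localizes: every $\q\in\Ass_R(M_n/(x_1,\ldots,x_{i-1})M_n)$ contained in $\p$ satisfies $\dim R/\q>0$, hence $x_i\notin\q$, so $x_1,\ldots,x_{r_0}$ is a genuine regular $(M_n)_\p$-sequence in $I_\p$ and $\depth(I_\p,(M_n)_\p)\ge r_0$. If this inequality is strict, $H^{r_0}_I(M_n)_\p=H^{r_0}_{I_\p}((M_n)_\p)=0$; if equality holds, Lemma \ref{L31} applied in $R_\p$ yields
$$\Ass_{R_\p}\bigl(H^{r_0}_{I_\p}((M_n)_\p)\bigr)=\Ass_{R_\p}\bigl((M_n)_\p/(x_1,\ldots,x_{r_0})(M_n)_\p\bigr)\cap V(I_\p).$$
The depth hypothesis is automatic whenever $\p\in\Ass_R(M_n/(x_1,\ldots,x_{r_0})M_n)\cap V(I)$ with $\p\ne\m$ (because $\Hom_{R_\p}(R_\p/I_\p,-)\ne 0$ at $\p$ forces the depth of the quotient to $0$, and the regular sequence then raises it by $r_0$). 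Unwinding the localizations gives
$$\Ass_R\bigl(H^{r_0}_I(M_n)\bigr)\setminus\{\m\}=\bigl(\Ass_R(M_n/(x_1,\ldots,x_{r_0})M_n)\cap V(I)\bigr)\setminus\{\m\}.$$
Since $(x_1,\ldots,x_{r_0})\fM$ is a graded submodule of $\fM$, Lemma \ref{L2} applied to the graded $\fR$-module $\fM/(x_1,\ldots,x_{r_0})\fM$ makes the right-hand side stable for large $n$.

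For (b), let $r_{-1}$ be the eventual value of $\depth(I,M_n)$; by Remark \ref{R1}(i) $r_{-1}\le r_0$, and both are stable by Theorem \ref{L3}. If $r_{-1}=r_0$, then $H^j_I(M_n)=0$ for all $j<r_0$ and large $n$, and the union collapses to $\Ass_R(H^{r_0}_I(M_n))$, which is stable by Theorem \ref{KL}. If $r_{-1}<r_0$, then for large $n$ the module $H^{r_{-1}}_I(M_n)$ is nonzero (by definition of depth) and Artinian (since $r_{-1}<r_0$), so $\m\in\Ass_R(H^{r_{-1}}_I(M_n))$ lies in the union; combined with (a) the union equals $\{\m\}\cup(\Ass_R(H^{r_0}_I(M_n))\setminus\{\m\})$, stable for large $n$.

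The main obstacle is the localization argument in (a): one must carefully verify that the dimension-${>}0$ property of the sequence localizes to ordinary regularity at each non-maximal prime, and that membership in $\Ass_R(M_n/(x_1,\ldots,x_{r_0})M_n)\cap V(I)\setminus\{\m\}$ is sufficient (not merely necessary) for membership in $\Ass_R(H^{r_0}_I(M_n))$. The degenerate cases $r_0=0$ (where $\Ass_R(\Gamma_I(M_n))=\Ass_R(M_n)\cap V(I)$ is handled directly by Lemma \ref{L2}) and $r_0=\infty$ (where $\Supp(H^j_I(M_n))\subseteq\{\m\}$ for every $j$) need a brief separate treatment.
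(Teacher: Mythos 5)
Your proof is correct, and the interesting part of it genuinely departs from the paper. Both proofs reduce the problem to two pieces: (a) stability of $\Ass_R(H^{r_0}_I(M_n))\setminus\{\m\}$, and (b) whether $\m$ eventually belongs to the union; and both handle (b) identically, by comparing the eventual depth $r_{-1}$ with $r_0$ and using that a nonzero Artinian module over $(R,\m)$ has $\{\m\}$ for its set of associated primes.

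The divergence is in (a). The paper first shows that $\bigcup_{n\ge a}\Ass_R(H^{r_0}_I(M_n))$ is a \emph{finite} set $S$, via Lemma \ref{LL1}, the \v{C}ech/direct-limit description of $H^{r_0}_{(x_1,\ldots,x_{r_0})}(M_n)$, and \cite[Proposition 2.6]{BN}; it then fixes each $\p\in S\setminus\{\m\}$, shows $\depth(I_\p,(M_n)_\p)=r_0$ eventually (using Theorem \ref{L3} at $\p$), and applies Theorem \ref{KL} over $R_\p$ to stabilize $\Ass_{R_\p}(H^{r_0}_{I_\p}((M_n)_\p))$, finally assembling these finitely many facts. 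You instead prove the clean identity
$$\Ass_R\bigl(H^{r_0}_I(M_n)\bigr)\setminus\{\m\}=\bigl(\Ass_R(M_n/(x_1,\ldots,x_{r_0})M_n)\cap V(I)\bigr)\setminus\{\m\}$$
for all large $n$, by localizing at an arbitrary non-maximal prime: the dimension-${>}0$ property makes $x_1,\ldots,x_{r_0}$ an honest regular $(M_n)_\p$-sequence, depth-counting pins $\depth(I_\p,(M_n)_\p)$ to $r_0$ exactly on both sides of the claimed identity, and Lemma \ref{L31} applied in $R_\p$ does the rest. Stability then follows at one stroke from Lemma \ref{L2} applied to the graded module $\fM/(x_1,\ldots,x_{r_0})\fM$. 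This is more direct than the paper's route, avoids the intermediate finiteness argument and the re-use of Theorem \ref{KL}, and has the added benefit of exhibiting the stable set explicitly; it is in effect a filter-depth analogue of Lemma \ref{L31}. The only thing I would make explicit in a final write-up is the Nakayama observation that if $(M_n)_\p\ne 0$ (forced by $\p$ being in either side of the identity) then none of the intermediate quotients $(M_n)_\p/(x_1,\ldots,x_{i-1})(M_n)_\p$ vanish, so the localized sequence really is a proper regular sequence of length $r_0$; you invoke this implicitly when you say ``the regular sequence then raises it by $r_0$.''
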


\begin{proof} By virtue of Lemmas \ref{L2} and \ref{L41} the theorem is trivial for the case of either $r_0=0$ or $r=\infty.$ Therefore it remains to prove the theorem in the case $1\le r_0<\infty$. Then by Theorem \ref{L3} and Lemmas \ref{L2} and \ref{L4} there exists an integer $a$ such that for all $n\ge a$ the following conditions are true: (i) $r_0=\f-depth(I,M_n)$; (ii) there is a sequence $x_1,\ldots,x_{r_0}\in I$ which is at the same time a permutable filter regular sequence of $M_n$ and a permutable $I-$filter regular sequence of $M_n$; and (iii) the set $\Ass_R(M_n/(x_1,\ldots,x_{r_0})M_n)$ is independent of $n$. We first prove that $\bigcup_{n\ge a}\Ass_R(H^{r_0}_I(M_n))$ is a finite set. Let $n\ge a$. We have $H^{r_0}_I(M_n)\cong H^0_I(H^{r_0}_{(x_1,\ldots,x_{r_0})}(M_n)$ by Lemma \ref{LL1}, and 
$$H^{r_0}_{(x_1,\ldots,x_{r_0})}(M_n)\cong\underset{t}{\varinjlim}(M_n/(x_1^t,\ldots,x_{r_0}^t)M_n)$$
by \cite[Theorem 5.2.9]{BS}. Thus  by virtue of   \cite[Proposition 2.6]{BN} we obtain 
$$\Ass_R(H^{r_0}_I(M_n))\subseteq\bigcup_{t>
0}\Ass_R(M_n/(x_1^t,\ldots,x_{r_0}^t)M_n)=\Ass_R(M_n/(x_1,\ldots,x_{r_0})M_n).$$
 Therefore the set $\bigcup_{n\geq a}\Ass_R(H^{r_0}_I(M_n))$ is finite by the choice of $a$. From this we may assume without loss of generality  that $a$ large enough such that the set $S=\bigcup_{n\ge a}\Ass_R(H^{r_0}_I(M_n))$ only  consists  of all prime ideals $\p$ which belong to $\Ass_R(H^{r_0}_I(M_n))$ for infinitely many $n\ge a$.\\ 
Next,  we claim that $\Ass_R(H^{r_0}_I(M_n))\setminus\{\m\}$ is stable for large $n$. Indeed, for any $\p\in S$ there is an infinite set $T$ of integers such that $\p\in\Ass_R(H^{r_0}_I(M_n))$ for all $n\in T$. Hence $H^{r_0}_{I_\p}((M_n)_\p)\not=0$ so that $\depth(I_\p,(M_n)_\p)\le r_0$. As  $\p\in\Supp(M_n/IM_n)\setminus\{\m\}$, $\depth(I_\p,(M_n)_\p)\geq r_0$ by Lemma \ref{LL0}. Thus  $\depth(I_\p,(M_n)_\p)=r_0$ for all $n\in T.$ Keep in mind that $\depth(I_\p,(M_n)_\p)$ is stable for  large $n$ by Theorem \ref{L3}. Hence $\depth(I_\p,(M_n)_\p)=r_0$ for all  large $n$. It follows by Theorem 
\ref{KL} that $\Ass_{R_\p}(H^{r_0}_{I_\p}((M_n)_\p))$ is stable for large $n$. Therefore $\Ass_R(H^{r_0}_I(M_n))\setminus\{\m\}$ is stable for large $n$, and the claim follows.\\
Let $r$ be the eventual value of $\depth(I,M_n)$. It is clear that $r\leq r_0$. If $r=r_0$, then $\bigcup_{j\le r_0}\Ass_R(H^j_I(M_n))=\Ass_R(H^{r_0}_I(M_n))$, and  this set is stable for large $n$ by Theorem \ref{KL}. If $r<r_0$, there exists an enough large $n$ such that  $H^{r}_I(M_n)\not=0$, which is an Artinian module  by Lemma \ref{L41}. It follows that
$$\bigcup_{j\leq r_0}\Ass_R(H^j_I(M_n))=\Ass_R(H^{r_0}_I(M_n))\cup\{\m\}$$
is stable for large $n$, and the theorem is proved. 
\end{proof}

Before proving the statement (ii) of Theorem \ref{MTh} for $M/J^nM$, we need the following lemma.

\begin{lemma}\label{L42} Let $A$ be a submodule of an $R$-module $K$. Then we have 
$$\Ass_R(K/A)\setminus\Supp(A)=\Ass_R(K)\setminus\Supp(A).$$
\end{lemma}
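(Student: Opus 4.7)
The plan is to prove both inclusions simultaneously by localizing at the prime in question. The key observation is that for any $\p \in \Spec(R)$, the condition $\p \notin \Supp(A)$ is equivalent to $A_\p = 0$; in that case the short exact sequence $0 \to A \to K \to K/A \to 0$ localizes at $\p$ to yield an isomorphism $K_\p \cong (K/A)_\p$ of $R_\p$-modules. This identification is what really drives the lemma: as soon as $A$ disappears after localization, the modules $K$ and $K/A$ become indistinguishable in $R_\p$, and in particular have the same associated primes over $R_\p$.

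I would then invoke the standard localization criterion for associated primes over a Noetherian ring: for any $R$-module $N$ and any $\p \in \Spec(R)$, one has $\p \in \Ass_R(N)$ if and only if $\p R_\p \in \Ass_{R_\p}(N_\p)$. Applied to both $N = K$ and $N = K/A$ for a prime $\p \notin \Supp(A)$, this criterion together with the isomorphism $K_\p \cong (K/A)_\p$ forces the equivalence of $\p \in \Ass_R(K)$ and $\p \in \Ass_R(K/A)$, which is exactly the claimed set equality once we intersect with the complement of $\Supp(A)$. The only (rather mild) subtlety is that the localization criterion must be invoked in its general form, valid for arbitrary, not necessarily finitely generated, modules over a Noetherian ring, since in the applications envisaged $K$ will typically be of the form $H^j_I(M)$ or a similar local cohomology module. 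No real obstacle arises, however, as this form of the criterion is standard.
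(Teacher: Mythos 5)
Your proof is correct, and it reaches the conclusion by a somewhat different route than the paper. You package everything as an appeal to the localization criterion $\p\in\Ass_R(N)\Leftrightarrow \p R_\p\in\Ass_{R_\p}(N_\p)$ (valid over Noetherian rings for arbitrary modules, since only the finite generation of $\p$ itself is needed), combined with the isomorphism $K_\p\cong(K/A)_\p$ coming from $A_\p=0$; this handles both inclusions at once and is clean. The paper instead argues the nontrivial inclusion $\Ass_R(K/A)\setminus\Supp(A)\subseteq\Ass_R(K)$ by hand: starting from $\p=(A:\nu)_R$, it uses $A_\p=0$ to get $(\p\nu)_\p=0$, then the finite generation of $\p\nu$ to find $r\notin\p$ with $r\p\nu=0$, and finally verifies directly that $\p=\ann_R(r\nu)$; the reverse inclusion follows trivially from $\Ass_R(K)\subseteq\Ass_R(A)\cup\Ass_R(K/A)$. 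The underlying mechanism (localizing at $\p$ to make $A$ vanish, then using Noetherianness of $R$ to clear a single denominator) is the same in both, so the two arguments are really unwindings of each other; yours is shorter because it cites the localization criterion as a black box, while the paper's is self-contained. You were right to flag that the criterion must be invoked in its form for not-necessarily-finitely-generated modules, since the $K$ arising in the application is a local cohomology module.
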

\begin{proof} Let $\p\in\Ass_R(K/A)\setminus\Supp(A)$ then $\p=(A:\nu)_R$ for
some $\nu\in K$. It yields that $\p \nu\subseteq A$. So as $A_\p=0$
we have $(\p \nu)_\p=0$. From this, since $\p \nu$ is a finitely
generated $R$-module, it easy to verify that $\p\subseteq\ann(r\nu)$ for
some $r\notin\p$. For  any $x\in\ann(r\nu)$ then
$rx\in\ann(\nu)\subseteq (A:\nu)_R=\p$; so $x\in\p$, since  $r\notin\p$.
Thus $\p=\ann(r\nu)$, so that $\p\in\Ass_R(K)$. It follows that $\Ass_R(K/A)\setminus\Supp(A)\subseteq\Ass_R(K)\setminus\Supp(A).$
The converse inclusion is trivial.
\end{proof}

\begin{theorem}\label{T43} Let $s_0$ be the eventual value of $\f-depth(I,M/J^nM)$. Then the set $\qquad$
$\bigcup_{j\leq s_0}\Ass_R(H^j_I(M/J^nM))$ is stable for large $n.$
\end{theorem}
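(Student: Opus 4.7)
\medskip
\noindent\textbf{Proof proposal.} The plan is to adapt the strategy of Theorem \ref{T42}, using a localization argument together with Theorem \ref{T41} to handle primes different from $\m$, and a comparison with the eventual ordinary depth $s$ of $\depth(I,M/J^nM)$ (which exists by Theorem \ref{L3} and satisfies $s\le s_0$) to handle $\m$ itself. The cases $s_0=0$ and $s_0=\infty$ are routine: in the first the union equals $\Ass_R(M/J^nM)\cap V(I)$, stable by Lemma \ref{L2}; in the second every $H^j_I(M/J^nM)$ is Artinian by Lemma \ref{L41}, so the union lies in $\{\m\}$. Assume $1\le s_0<\infty$.

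By Theorem \ref{L3} and Lemma \ref{L4}, I choose an integer $a>0$ and a sequence $x_1,\ldots,x_{s_0}\in I$ which, for every $n\ge a$, is simultaneously a filter regular sequence (in dimension $>0$) of $M/J^nM$ and an $I$-filter regular sequence of $M/J^nM$. Combining Lemma \ref{LL1}, the direct-limit Koszul description of top local cohomology \cite[5.2.9]{BS}, and \cite[Proposition 2.6]{BN} exactly as in the proof of Theorem \ref{T42} yields
\[
\Ass_R(H^{s_0}_I(M/J^nM))\subseteq \Ass_R(M'/J^nM'),\qquad M':=M/(x_1,\ldots,x_{s_0})M.
\]
Applying Lemma \ref{L2} to $M'$ and the ideal $J$, the right-hand side is stable in $n$, hence $\bigcup_{n\ge a}\Ass_R(H^{s_0}_I(M/J^nM))$ is a finite set $S$. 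Enlarging $a$, I may assume that every $\p\in S$ belongs to $\Ass_R(H^{s_0}_I(M/J^nM))$ for infinitely many $n\ge a$.

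I now stabilize the non-maximal part by localization. Fix $\p\in S\setminus\{\m\}$; then $\p\in V(I)\cap\Supp(M/J^nM)$ with $\dim R/\p\ge 1$, so Lemma \ref{LL0} (with $k=0$, $i=1$) gives $\depth(I_\p,(M/J^nM)_\p)\ge s_0$, while nonvanishing of $H^{s_0}_{I_\p}((M/J^nM)_\p)$ at the infinitely many $n$ for which $\p\in\Ass_R(H^{s_0}_I(M/J^nM))$ forces the reverse inequality. Theorem \ref{L3} applied over $R_\p$ therefore pins the eventual value of $\depth(I_\p,(M/J^nM)_\p)$ at $s_0$, and Theorem \ref{T41} applied to the local ring $R_\p$ makes $\Ass_{R_\p}(H^{s_0}_{I_\p}(M_\p/J_\p^n M_\p))$ stable for large $n$. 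Undoing the localization shows $\p$ lies in $\Ass_R(H^{s_0}_I(M/J^nM))$ either for all large $n$ or for none, and hence $\Ass_R(H^{s_0}_I(M/J^nM))\setminus\{\m\}$ is stable. Since Lemma \ref{L41} makes $H^j_I(M/J^nM)$ Artinian for each $j<s_0$, its associated primes lie in $\{\m\}$, and consequently
\[
\bigcup_{j\le s_0}\Ass_R(H^j_I(M/J^nM))\setminus\{\m\}=\Ass_R(H^{s_0}_I(M/J^nM))\setminus\{\m\}
\]
is stable for large $n$.

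It remains to prove that membership of $\m$ in the full union is eventually constant, which I settle by splitting on $s<s_0$ versus $s=s_0$. If $s<s_0$, then for $n\ge a$ the module $H^s_I(M/J^nM)$ is both nonzero (by definition of $s$) and Artinian (by Lemma \ref{L41}); over the local ring $(R,\m)$ any nonzero Artinian module has $\m$ as an associated prime, so $\m$ belongs to $\bigcup_{j\le s_0}\Ass_R(H^j_I(M/J^nM))$ for all $n\ge a$. If instead $s=s_0$, then for $n$ large $H^j_I(M/J^nM)=0$ whenever $j<s_0$, so the union collapses to $\Ass_R(H^{s_0}_I(M/J^nM))$, which is already stable by Theorem \ref{T41}. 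In both cases the full union is stable for large $n$, finishing the proof. The main technical point I expect to be delicate is the finiteness of $\bigcup_{n\ge a}\Ass_R(H^{s_0}_I(M/J^nM))$ via the Koszul direct-limit bound together with the uniform choice of the sequence $x_1,\ldots,x_{s_0}$; once that is in place, the localization step and the case split for $\m$ proceed as above.
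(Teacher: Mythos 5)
Your proof is correct, but it takes a genuinely different route from the paper's. The paper reduces the $M/J^nM$ case to the already-established graded case (Theorem \ref{T42} applied to $J^nM/J^{n+1}M$) via the long exact sequence of local cohomology coming from $0 \to J^nM/J^{n+1}M \to M/J^{n+1}M \to M/J^nM \to 0$; it then splits on $r_0-1\ge s_0$ versus $r_0=s_0$, using Lemma \ref{C22} to know $r_0\ge s_0$, Lemma \ref{L41} for the Artinianness of the low-degree graded cohomology, and Lemma \ref{L42} to control associated primes modulo $\{\m\}$, obtaining a descending chain of finite sets. You instead replay the entire structure of Theorem \ref{T42}'s proof directly with $N_n=M/J^nM$: a uniform choice of a permutable filter regular / $I$-filter regular sequence via Lemma \ref{L4}, the Lemma \ref{LL1} plus Koszul direct-limit plus \cite[Prop.\ 2.6]{BN} bound to get finiteness of $\bigcup_n\Ass_R(H^{s_0}_I(M/J^nM))$, the localization at $\p\ne\m$ (which correctly forces the local eventual depth to be exactly $s_0$ by combining Lemma \ref{LL0} with Theorem \ref{L3} over $R_\p$) followed by Theorem \ref{T41} over $R_\p$, and finally the same $s<s_0$ versus $s=s_0$ dichotomy for the maximal ideal. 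Your approach avoids entirely the short exact sequence and Lemma \ref{C22}, and is somewhat more uniform in that it treats $M/J^nM$ on the same footing as the graded pieces rather than as derived from them; the paper's approach is shorter at this point because it leverages Theorem \ref{T42} as a black box and does not need to repeat the filter-sequence machinery.
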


\begin{proof} By Theorem \ref{L3}, there is an integer $a$ such that $r_0=\f-depth(I,J^nM/J^{n+1}M)$ and $s_0=\f-depth(I,M/J^nM)$ for all $n\ge a$. By Theorem \ref{T41} the result is trivial for the case  $s_0=\infty.$ Assume now that  $s_0<\infty$. Using the last part in the proof of Theorem \ref {T42} we need only to show that the set $\Ass_R(H^{s_0}_I(M/J^nM))\setminus\{\m\}$ is stable for large $n$. From the short exact sequence $0\sr J^nM/J^{n+1}M\sr M/J^{n+1}M\sr M/J^nM\sr 0,$ we get the following long exact
sequence of local cohomology modules
$$
\begin{aligned}
\cdots\sr H^{j-1}_I(M/J^nM)\sr H^j_I&(J^nM/J^{n+1}M)\\
&\sr H^j_I(M/J^{n+1}M)\sr H^j_I(M/J^nM)\sr\cdots
\end{aligned}
$$
for all $n>0$.  Note that $r_0\ge s_0$ by Lemma \ref{C22}. Thus we  consider two cases as follows.
\medskip

\noindent{\it Case 1: }$r_0-1\ge s_0$. By the long exact sequence, we get the following exact sequences 
$$H^{s_0}_I(J^nM/J^{n+1}M)\sr H^{s_0}_I(M/J^{n+1}M)\sr H^{s_0}_I(M/J^nM),$$
for all $n\ge a$, in which $H^{s_0}_I(J^nM/J^{n+1}M)$ is Artinian by Lemma \ref{L41}. Thus we have 
$$\Ass_R(H^{s_0}_I(M/J^{n+1}M))\setminus\{\m\}\subseteq\Ass_R(H^{s_0}_I(M/J^nM))\setminus\{\m\}$$
for all $n\ge a$. Since $\Ass_R(H^{s_0}_I(M/J^nM))$ is finite for $n=a$ by \cite[Theorem 2.5]{CH}, the set
$\Ass_R(H^{s_0}_I(M/J^nM))\setminus\{\m\}$ is stable for large $n$.
\medskip

\noindent{\it Case 2: }$r_0=s_0$. Note by Theorem \ref{T42} that  $\Ass_R(H^{s_0}_I(J^nM/J^{n+1}M))\setminus\{\m\}$ is stable for all $n\ge b$ for some integer $b\ge a$. Set $X=\Ass_R(H^{s_0}_I(J^bM/J^{b+1}M))\setminus\{\m\}.$ Thus we get by the exact sequence  
$$H^{s_0-1}_I(M/J^nM)\sr H^{s_0}_I(J^nM/J^{n+1}M)\sr H^{s_0}_I(M/J^{n+1}M)\sr H^{s_0}_I(M/J^nM)$$
for all $n\ge a$, in which $H^{s_0-1}_I(M/J^nM)$ is Artinian by Lemma \ref{L41}, and  Lemma \ref{L42} that 
$$X\subseteq \Ass_R(H_I^{s_0}(M/J^{n+1}M))\setminus\{\m\} \subseteq
(\Ass_R (H_I^{s_0}(M/J^nM))\setminus\{\m\})\cup X$$
for all $n\ge b$. Therefore
\begin{alignat}{2}
\Ass_R (H_I^{s_0}(M/J^{n+2}M))\setminus\{\m\}&\subseteq\big(\Ass_R(H_I^{s_0}(M/J^{n+1}M))\setminus\{\m\}\big)\cup X\notag\\
&=\Ass_R(H_I^{s_0}(M/J^{n+1}M))\setminus\{\m\}\notag
\end{alignat}
 for all $n\ge b$. Note that $\Ass_R(H_I^{s_0}(M/J^{n+1}M))\setminus\{\m\}$ is a finite set for $n=b$ by \cite[Theorem 2.5]{CH}. It follows that $\Ass_R(H_I^{s_0}(M/J^nM))\setminus\{\m\}$ is stable for large $n$ as required.
\end{proof}

\section{Proof of Theorem \ref{MTh}, (iii)}

To prove the statement (iii) of Theorem \ref{MTh} we first need the following known results.

\begin{lemma}\label{L51} {\rm (\cite[Corollary 4.4]{CH1})} The following equality holds true
$$\gdepth(I, M)=\inf\{ i\mid \Supp(H^i_I(M))\text{ is not finite }\}.$$
\end{lemma}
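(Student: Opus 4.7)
The plan is to combine the characterization of generalized depth via $\Ext$ dimensions given in Lemma~\ref{LL0} (applied with $k=1$) with the comparison between supports of $\Ext$ and local cohomology used in the proof of that same lemma. Writing $r=\gdepth(I,M)=\depth_1(I,M)$, Lemma~\ref{LL0} yields
$$r=\inf\{j\mid\dim(\Ext^j_R(R/I,M))>1\},$$
and the underlying identity
$$\bigcup_{j\le l}\Supp(H^j_I(M))=\bigcup_{j\le l}\Supp(\Ext^j_R(R/I,M))$$
holds for every $l\ge 0$.

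The bridge between ``finite support'' and ``dimension $\le 1$'' is a simple elementary fact: for a finitely generated $R$-module $E$, the closed set $\Supp(E)=V(\ann(E))$ is finite if and only if $\dim(E)\le 1$. The $(\Leftarrow)$ direction holds because the primes of the Noetherian local ring $R/\ann(E)$ of dimension $\le 1$ reduce to the maximal ideal together with the finitely many minimal primes. The $(\Rightarrow)$ direction uses the standard fact that a Noetherian local domain of dimension $\ge 2$ has infinitely many height-one primes (a consequence of Krull's principal ideal theorem and prime avoidance).

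With these two inputs, I split the proof into two inequalities. For any $j<r$ we have $\dim(\Ext^j_R(R/I,M))\le 1$ by the choice of $r$, so $\Supp(\Ext^j_R(R/I,M))$ is finite by the observation above; consequently $\bigcup_{j<r}\Supp(H^j_I(M))$ is a finite set, and in particular $\Supp(H^j_I(M))$ is finite for every $j<r$, giving $r\le\inf\{i\mid\Supp(H^i_I(M))\text{ is not finite}\}$. At $j=r$ one has $\dim(\Ext^r_R(R/I,M))>1$, hence $\Supp(\Ext^r_R(R/I,M))$ is infinite; since it is contained in $\bigcup_{j\le r}\Supp(H^j_I(M))$ and the subunion over $j<r$ was just shown to be finite, $\Supp(H^r_I(M))$ itself must be infinite, which yields the reverse inequality. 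The degenerate case $r=\infty$ (i.e. $\dim(M/IM)\le 1$) is handled by the same dimension-to-finite-support translation applied to the inclusion $\Supp(H^j_I(M))\subseteq\Supp(M/IM)$.

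The only subtle point is transferring the finite-support information from the finitely generated modules $\Ext^j_R(R/I,M)$ to the typically non-finitely generated modules $H^j_I(M)$. The supports need not agree term by term, so one cannot compare $\Supp(H^r_I(M))$ with $\Supp(\Ext^r_R(R/I,M))$ directly; instead, one must use the cumulative identity of \cite[Lemma 2.8]{CH1} and balance the finite union $\bigcup_{j<r}\Supp(H^j_I(M))$ against the infinite set $\Supp(\Ext^r_R(R/I,M))$. This bookkeeping is where the argument actually lives; the rest is a translation between two ways of saying ``dimension at most $1$''.
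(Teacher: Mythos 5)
The paper does not prove this lemma; it imports it from \cite[Corollary 4.4]{CH1}, so there is no in-text argument to compare against. Your proof is nevertheless a correct, self-contained derivation using only tools already present in the paper: the $\Ext$-theoretic description $r=\inf\{j\mid\dim(\Ext^j_R(R/I,M))>1\}$ from Lemma~\ref{LL0} with $k=1$, the cumulative support identity of \cite[Lemma 2.8]{CH1}, and the elementary observation that a finitely generated module over a Noetherian local ring has finite support iff its dimension is $\le 1$ (your justifications in both directions — finitely many minimal primes plus $\m$ for $\dim\le 1$, and the infinitude of height-one primes in a local domain of dimension $\ge 2$ via Krull and prime avoidance — are both sound). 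The one genuinely delicate point, namely that the support identity holds only cumulatively and not degree-by-degree, you handle correctly: you first establish that $\bigcup_{j<r}\Supp(H^j_I(M))$ is finite, then use the inclusion $\Supp(\Ext^r_R(R/I,M))\subseteq\bigcup_{j\le r}\Supp(H^j_I(M))$ to force $\Supp(H^r_I(M))$ to be infinite. The degenerate case $r=\infty$ via $\Supp(H^j_I(M))\subseteq\Supp(M/IM)$ is also correct. Given that the paper's own Lemma~\ref{LL0} leans on the same \cite[Lemma 2.8]{CH1}, this is very likely the intended argument.
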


\begin{lemma}\label{L52} {\rm (\cite[Corollary 4.6]{CH1})} The set $\Ass(H^j_I(N))$ is finite for all $j\le\gdepth(I,N)$.
\end{lemma}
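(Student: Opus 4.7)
The claim naturally splits into two regimes. For $j<r:=\gdepth(I,N)$ the statement is immediate from Lemma~\ref{L51}: that lemma gives $\Supp(H^j_I(N))$ finite, and since $\Ass(H^j_I(N))\subseteq\Supp(H^j_I(N))$, the associated primes form a finite set. Thus the substantive case is $j=r$, where $\Supp(H^r_I(N))$ may well be infinite while $\Ass(H^r_I(N))$ is still claimed to be finite.

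For $j=r$ my plan is to decompose the top local cohomology using a well-chosen sequence, in the spirit of Lemma~\ref{L4}. By the single-module analogue of that lemma (combining \cite[Proposition 2.5]{BN} with Prime Avoidance) pick $x_1,\dots,x_r\in I$ that is simultaneously a permutable $N$-sequence in dimension $>1$ and a permutable $I$-filter regular sequence of $N$. Then Lemma~\ref{LL1} yields $H^r_I(N)\cong H^0_I(H^r_{(x_1,\dots,x_r)}(N))$, so $\Ass(H^r_I(N))=\Ass(H^r_{(x_1,\dots,x_r)}(N))\cap V(I)$. Combining this with the description $H^r_{(x_1,\dots,x_r)}(N)\cong\varinjlim_t N/(x_1^t,\dots,x_r^t)N$ and the standard inclusion of associated primes of a filtered direct limit into the union of associated primes of its terms, I obtain $\Ass(H^r_I(N))\subseteq V(I)\cap\bigcup_{t>0}\Ass\bigl(N/(x_1^t,\dots,x_r^t)N\bigr)$. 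The right-hand side is then controlled by \cite[Proposition 2.6]{BN}: for a permutable $N$-sequence in dimension $>1$, the associated primes of $N/(x_1^t,\dots,x_r^t)N$ of dimension $>1$ coincide with those of $N/(x_1,\dots,x_r)N$, a finitely generated $R$-module whose $\Ass$ is automatically finite.

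The remaining (and genuinely nontrivial) obstacle is to bound the dimension-$\le 1$ primes in $\Ass(H^r_I(N))$. Dimension $0$ contributes only $\{\m\}$, so the real task is to show that only finitely many dimension-$1$ primes enter the associated primes. For any such $\p\supseteq I$ with $\p\in\Ass(H^r_I(N))$, Lemma~\ref{LL0} applied with $k=i=1$ forces $\f-depth(I_\p,N_\p)\ge r$, and then Lemma~\ref{L41} makes $H^j_{I_\p}(N_\p)$ Artinian over $R_\p$ for every $j<r$. I would then leverage this Artinianness, together with the Ext-interpretation $\bigcup_{j\le r}\Supp(H^j_I(N))=\bigcup_{j\le r}\Supp(\Ext^j_R(R/I,N))$ used in the proof of Lemma~\ref{LL0} and the already-established bound on the dimension-$>1$ part, to identify each such $\p$ with an associated prime of a finitely generated module (essentially $\Ext^r_R(R/I,N)$ or a suitable subquotient) and thereby force finiteness. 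This dimension-$1$ bookkeeping is the delicate technical core of \cite[Corollary 4.6]{CH1} and is the main obstacle in the argument.
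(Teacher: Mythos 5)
The paper does not prove this lemma --- it is quoted verbatim from \cite[Corollary 4.6]{CH1}, and the introduction also attributes it to \cite{KhS1} and \cite{Nh} --- so there is no in-paper proof to compare against. Judged on its own, your argument is sound where you actually carry it out: the case $j<\gdepth(I,N)$ follows from Lemma~\ref{L51} exactly as you say, and the reduction of $\Ass(H^r_I(N))$ for $r=\gdepth(I,N)$ to $V(I)\cap\bigcup_{t>0}\Ass(N/(x_1^t,\ldots,x_r^t)N)$ via Lemma~\ref{LL1} and the direct-limit description is correct, with \cite[Proposition 2.6]{BN} then controlling the dimension-$>1$ part. But you leave the dimension-$\le 1$ primes as an acknowledged gap, and the route you sketch for them does not work: placing such a $\p$ in $\Supp(\Ext^r_R(R/I,N))$ (a closed set which may well have dimension $\ge 2$) gives no bound on the number of one-dimensional primes it contains, and Artinianness of $H^j_{I_\p}(N_\p)$ for $j<r$ by itself says nothing about $\Ass$ at level $r$.

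The fix is more elementary than what you propose and lives inside the same direct-limit bound you already set up. If $\p\supseteq(x_1,\ldots,x_r)$ has $\dim(R/\p)=1$, then any $\q\subsetneq\p$ satisfies $\dim(R/\q)>1$ while $\dim(R_\p/\q R_\p)>0$; hence a permutable $N$-sequence in dimension $>1$ becomes, after localizing at $\p$, a permutable $N_\p$-filter regular sequence. The $k=0$ case of \cite[Proposition 2.6]{BN} (used with exactly this meaning in the proof of Theorem~\ref{T42}) then gives $\Ass_{R_\p}(N_\p/(x_1^t,\ldots,x_r^t)N_\p)=\Ass_{R_\p}(N_\p/(x_1,\ldots,x_r)N_\p)$ for every $t$, so $\p\in\Ass(N/(x_1^t,\ldots,x_r^t)N)$ if and only if $\p\in\Ass(N/(x_1,\ldots,x_r)N)$. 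Combined with your dimension-$>1$ bound and the observation that dimension $0$ contributes at most $\{\m\}$, this yields $\bigcup_{t>0}\Ass(N/(x_1^t,\ldots,x_r^t)N)\subseteq\Ass(N/(x_1,\ldots,x_r)N)\cup\{\m\}$, a finite set, and the lemma follows. So the architecture of your argument is right, but as written the proposal has a genuine gap at dimension one, and the Artinianness-plus-Ext route you suggest would not close it.
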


The first conclution of  Theorem \ref{MTh}, (iii) is a special case of the following result.
\begin{theorem}\label{T52} Let $r_1$ be the eventual value of $\gdepth(I,M_n)$. Then for each $l\le r_1$ the set  
$\bigcup_{j\le l}\Ass_R(H^j_I(M_n))\cup\{\m\}$ is stable for large $n.$
\end{theorem}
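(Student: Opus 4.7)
The strategy proceeds in two parts: (i) bound $\bigcup_{j\le l}\Ass_R(H^j_I(M_n))\setminus\{\m\}$ inside a finite set that is stable for large $n$; (ii) for each non-maximal prime $\p$ in that bounded set, localize at $\p$ and reduce to Theorem \ref{T42} applied in $R_\p$ to force membership of $\p$ in the union to become eventually constant. The trivial cases $l=0$ (where $\Ass_R(H^0_I(M_n))=\Ass_R(M_n)\cap V(I)$ is stable by Lemma \ref{L2}) and $r_1\in\{0,\infty\}$ are dispatched separately; henceforth assume $1\le l\le r_1<\infty$.

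For step (i), apply Lemma \ref{L4} to obtain a single sequence $x_1,\ldots,x_{r_1}\in I$ that is simultaneously a permutable $M_n$-sequence in dimension $>1$ and a permutable $I$-filter regular sequence of $M_n$ for all $n$ beyond some threshold. Each prefix $x_1,\ldots,x_j$ with $j\le l$ inherits both properties, and Lemma \ref{LL1} (taking $r=j$) gives $H^j_I(M_n)\cong H^0_I(H^j_{(x_1,\ldots,x_j)}(M_n))$. Since $H^j_{(x_1,\ldots,x_j)}(M_n)\cong\varinjlim_t M_n/(x_1^t,\ldots,x_j^t)M_n$, the Brodmann--Nhan argument already used in the proof of Theorem \ref{T42} yields
$$\Ass_R(H^j_I(M_n))\subseteq\Ass_R\bigl(M_n/(x_1,\ldots,x_j)M_n\bigr),$$
which is finite and stable for large $n$ by Lemma \ref{L2}. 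Taking the union over $j\le l$, the set $\bigcup_{j\le l}\Ass_R(H^j_I(M_n))$ is contained in a fixed finite stable set $B$.

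For step (ii), fix $\p\in B$ with $\p\ne\m$ and pass to the local ring $R_\p$. By Lemma \ref{LL0} with $k=1$ and $i=1$, $\f-depth(I_\p,(M_n)_\p)\ge\gdepth(I,M_n)$; hence, applying Theorem \ref{L3} in $R_\p$, the eventual value $f_\p$ of $\f-depth(I_\p,(M_n)_\p)$ satisfies $f_\p\ge r_1\ge l$. If $l<f_\p$, then Lemma \ref{L41} makes each $H^j_{I_\p}((M_n)_\p)$ with $j\le l$ Artinian for large $n$, so its associated primes are contained in $\{\p R_\p\}$; the union $\bigcup_{j\le l}\Ass_{R_\p}(H^j_{I_\p}((M_n)_\p))$ then equals $\{\p R_\p\}$ precisely when the eventual value $d_\p$ of $\depth(I_\p,(M_n)_\p)$ satisfies $d_\p\le l$, a condition independent of $n$. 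If $l=f_\p$, apply Theorem \ref{T42} in $R_\p$ directly to deduce stability of $\bigcup_{j\le f_\p}\Ass_{R_\p}(H^j_{I_\p}((M_n)_\p))$. In either case, the condition ``$\p\in\bigcup_{j\le l}\Ass_R(H^j_I(M_n))$'' is eventually constant.

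Combining the finite bound from (i) with the per-prime stability from (ii), applied over the finitely many $\p\in B\setminus\{\m\}$, yields stability of $\bigcup_{j\le l}\Ass_R(H^j_I(M_n))\setminus\{\m\}$ for large $n$; adjoining $\{\m\}$ preserves stability. The principal obstacle is the legitimate invocation of Theorems \ref{L3} and \ref{T42} in the local ring $R_\p$: one must recognize the localization of the graded $\fR$-module $\fM$ at $\p$ as a finitely generated graded module over the standard graded $R_\p$-algebra $\fR_\p$, and then verify that the filter-depth bound $f_\p\ge r_1$ from Lemma \ref{LL0} makes the dichotomy $l<f_\p$ versus $l=f_\p$ exhaustive, closing the case analysis.
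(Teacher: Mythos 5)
Your proposal is correct and follows essentially the same route as the paper's proof: bound $\bigcup_{j\le l}\Ass_R(H^j_I(M_n))$ in a fixed finite set via Lemma~\ref{L4} and a Brodmann--Nhan style containment $\Ass_R(H^j_I(M_n))\subseteq\Ass_R(M_n/(x_1,\ldots,x_j)M_n)$, then for each non-maximal $\p$ in that set localize at $\p$, use Lemma~\ref{LL0} to get the eventual filter depth $f_\p\ge r_1\ge l$, and split into the cases $l<f_\p$ (Artinianness via Lemma~\ref{L41}) and $l=f_\p$ (Theorem~\ref{T42} applied over $R_\p$). The only small differences are that the paper works out the degenerate cases $r_1=0$ and $r_1=\infty$ explicitly (for $r_1=\infty$ noting $\dim(M_n/IM_n)\le 1$ so the supports are finite) and refines the bounding set $S$ to primes appearing for infinitely many $n$; these are presentational rather than substantive, and your version closes the same logical gap.
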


\begin{proof} By theorem \ref{L3} and Lemmas \ref{L2} and \ref{L4} we can choose a positive integer $a$ such that $d=\dim(M_n),\ d'=\dim(M_n/IM_n)\text{ and }r_1=\gdepth(I,M_n)$ for all $n\ge a$.  
We first claim that there is an integer $b\ge a$ such that $S=\bigcup_{n\ge b}\bigcup_{j\le l}\Ass_R(H^j_I(M_n))$ is finite. In order to do this we consider three cases: $r_1=0$, $r_1=\infty$ and $1\le r_1<\infty$. If $r_1=0$, then  $S\subseteq\bigcup_{n\ge a}\Ass_R(M_n)$ is finite by Lemma \ref{L2}. If $r_1=\infty$,  then $d'\le 1$, so that $S\subseteq\bigcup_{n\ge a}\Supp_R(M_n/IM_n)$ is finite by Lemma \ref{L2}.   Now we consider the case $1\le r_1<\infty$. Then, by Lemma \ref{L4} we can choose an integer $b\ge a$ such that the following statements are true for all $n\ge b$: (i) there is a sequence $x_1,\ldots,x_{r_1}\in I$ which is at the same time a permutable generalized regular sequence of $M_n$ and a permutable $I-$filter regular sequence of $M_n$; and (ii) the sets 
$\Ass_R(M_n/(y_1,\ldots,y_j)M_n)$ are stable for all $j\le r_1$. From this and  \cite[Proposition 2.6]{BN} we have 
$S\subseteq\bigcup_{n\ge b}\bigcup_{j\le l}\Ass_R(M_n/(y_1,\ldots,y_j)M_n).$
Therefore $S$ is finite by Lemma \ref{L2} again, and the claim is proved. \\
Since $S$ is finite, we can choose the integer $b$  enough large such that $S$ only consists of all  prime ideals $\p$, which belong to  $\bigcup_{j\le l}\Ass_R(H^j_I(M_n))$ for infinitely many $n\ge b$. For any $\p\in S_{\ge 1}$, by Theorem \ref{L3}, there is an integer $n(\p)\ge b$ such that $r(\p)=\depth(I_\p,(M_n)_\p)$ and $r_0(\p)=\f-depth(I_\p,(M_n)_\p)$ for all $n\ge n(\p)$. Then, by the choice of $S$ and Lemma \ref{LL0}, we obtain that $r(\p)\le l\le r_1\le r_0(\p)$. If $l<r_0(\p)$, $\bigcup_{j\le l}\Ass_{R_\p}(H^j_{I_\p}((M_n)_\p))=\{\p R_\p\}$ for all $n\ge n(\p)$ by Lemma \ref{L41}. If $l=r_0(\p)$,  it follows from Theorem \ref{T42} that $\bigcup_{j\le l}\Ass_{R_\p}(H^j_{I_\p}((M_n)_\p))$ is stable for all $n\ge m(\p)$ for some integer $m(\p)\ge b$. Therefore $\bigcup_{j\le l}\Ass_R(H^j_I(M_n))\cup\{\m\}$ is stable for all $n\ge\max\{n(\p),\ m(\p)\mid\p\in S_{\ge 1}\}$ as required.
\end{proof}

Finally, by applying Theorem \ref{T52} we can prove the statement (iii) of Theorem \ref{MTh} for the $R-$modules $M/J^nM$.
 
\begin{theorem}\label{T53} Let $s_1$ be the eventual value of $\gdepth(I,M/J^nM)$. Then for each $l\le s_1$ the set $\bigcup_{j\le l}\Ass_R(H^j_I(M/J^nM))\cup\{\m\}$ is stable for large $n$.
\end{theorem}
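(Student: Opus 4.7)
The plan is to adapt the proof of Theorem \ref{T52} to the case $N_n = M/J^nM$, replacing the final appeal to Theorem \ref{T42} by Theorem \ref{T43} once we localize at primes different from $\m$. Fix $l \le s_1$. The strategy has two phases: first show that for some $b$ the set $S = \bigcup_{n \ge b}\bigcup_{j \le l}\Ass_R(H^j_I(M/J^nM))$ is finite; then, for each prime $\p \in S \setminus \{\m\}$ (a finite list), establish stability of $\bigcup_{j \le l}\Ass_{R_\p}(H^j_{I_\p}((M/J^nM)_\p))$ for large $n$, and finally reattach $\{\m\}$.

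For the finiteness of $S$, Theorem \ref{L3} stabilizes $s_1 = \gdepth(I, M/J^nM)$. The edge cases are easy: if $s_1 = 0$, then $\bigcup_{j \le 0}\Ass_R(H^j_I(M/J^nM)) = \Ass_R(M/J^nM) \cap V(I)$ is stable by Lemma \ref{L2}; if $s_1 = \infty$, then a direct computation with Nakayama's lemma gives $\Supp(M/(IM + J^nM)) = V(\ann M + I + J)$ (independent of $n \ge 1$), a set of dimension $\le 1$, hence finite in the local ring $R$, which contains every $\Supp(H^j_I(M/J^nM))$ and so $S$ itself. For the main case $1 \le s_1 < \infty$, Lemma \ref{L4} supplies a sequence $x_1,\ldots,x_{s_1} \in I$ that is simultaneously a permutable $(M/J^nM)$-sequence in dimension $>1$ and a permutable $I$-filter regular sequence of $M/J^nM$ for all $n \ge b$. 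Then, mimicking the argument in Theorem \ref{T52}, Lemma \ref{LL1} together with \cite[Proposition 2.6]{BN} gives
$$\Ass_R(H^j_I(M/J^nM)) \subseteq \Ass_R\big((M/(x_1,\ldots,x_j)M)/J^n(M/(x_1,\ldots,x_j)M)\big)$$
for every $j \le l$. Lemma \ref{L2} applied to the quotient module $M/(x_1,\ldots,x_j)M$ and ideal $J$ makes each right-hand side stable in $n$, so $S$ is finite. Enlarging $b$ if necessary, we may assume $S$ consists exactly of the primes appearing in $\bigcup_{j \le l}\Ass_R(H^j_I(M/J^nM))$ for infinitely many $n \ge b$.

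For the localization step, take $\p \in S \setminus \{\m\}$. By the second equality in Lemma \ref{LL0} with $k = i = 1$ and Theorem \ref{L3} applied over $R_\p$, we have $l \le s_1 \le \f-depth(I_\p, (M/J^nM)_\p)$ for all large $n$. If the inequality is strict, Lemma \ref{L41} shows that $H^j_{I_\p}((M/J^nM)_\p)$ is Artinian over $R_\p$ for every $j \le l$, whence $\bigcup_{j \le l}\Ass_{R_\p}(H^j_{I_\p}((M/J^nM)_\p)) \subseteq \{\p R_\p\}$, which is trivially stable. If equality holds, Theorem \ref{T43} applied to the data $(R_\p, I_\p, J_\p, M_\p)$ delivers stability of $\bigcup_{j \le l}\Ass_{R_\p}(H^j_{I_\p}(M_\p/J_\p^n M_\p))$ for large $n$. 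Taking $n$ past all finitely many local thresholds and reattaching $\{\m\}$ to the union yields the claim.

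The main technical obstacle is the finiteness of $S$ in the generic range $1 \le s_1 < \infty$: this requires both producing the common sequence via Lemma \ref{L4} and invoking \cite[Proposition 2.6]{BN} to control $\Ass_R(H^j_I(M/J^nM))$ uniformly in $n$ and $j \le l$. Once $S$ is known finite, the localization phase is a largely mechanical assembly of Theorem \ref{T43}'s conclusion over the finite list $S \setminus \{\m\}$, with the dichotomy $l < \f-depth(I_\p,\cdot)$ versus $l = \f-depth(I_\p,\cdot)$ absorbing every possibility permitted by Lemma \ref{LL0}.
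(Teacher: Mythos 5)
Your approach is correct and genuinely different from the paper's. The paper proves Theorem~\ref{T53} by induction on $l$, using the short exact sequence $0\to J^nM/J^{n+1}M\to M/J^{n+1}M\to M/J^nM\to 0$ together with Lemma~\ref{L42} (associated primes of quotients) and the finiteness of $\Supp(H^{j-1}_I(M/J^nM))$ for $j-1<s_1$ coming from Lemma~\ref{L51}, thereby reducing the assertion to Theorem~\ref{T52} (the graded case) and the inductive hypothesis via a descending-chain argument. You instead run the \emph{direct} argument of Theorem~\ref{T52} again with $N_n=M/J^nM$: establish finiteness of $S$ by Lemma~\ref{L4} combined with \cite[Proposition~2.6]{BN} and Brodmann's theorem applied to $M/(x_1,\ldots,x_j)M$, then localize at the finitely many primes of $S\setminus\{\m\}$ and invoke Theorem~\ref{T43} (rather than Theorem~\ref{T42}) when $l$ equals the local filter depth. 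What your route buys is a unified treatment of $M_n$ and $M/J^nM$ at the level of Theorem~\ref{T52}/\ref{T53}, at the cost of depending on Theorem~\ref{T43} (which itself is derived from the graded case by the very short-exact-sequence mechanism the paper repeats in Theorem~\ref{T53}); the paper's route keeps the two reductions modular and avoids re-running the localization machinery.

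One small point to tighten: in the localization dichotomy you write that when $l<\f-depth(I_\p,(M/J^nM)_\p)$ the union $\bigcup_{j\le l}\Ass_{R_\p}(H^j_{I_\p}((M/J^nM)_\p))\subseteq\{\p R_\p\}$ is ``trivially stable.'' Containment in a singleton does not by itself give eventual constancy; the set could a priori oscillate between $\emptyset$ and $\{\p R_\p\}$. What rescues it is the observation (used in the paper's proof of Theorem~\ref{T52}: ``$r(\p)\le l\le r_1\le r_0(\p)$'') that, because $\p\in S$ occurs in infinitely many $n$ and $\depth(I_\p,(M/J^nM)_\p)$ is eventually constant by Theorem~\ref{L3} applied over $R_\p$, we have $\depth(I_\p,(M/J^nM)_\p)\le l$ for all large $n$; hence some $H^j_{I_\p}$ with $j\le l$ is nonzero (and Artinian), so the union is in fact \emph{equal} to $\{\p R_\p\}$ for all large $n$. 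With that sentence added your proof is complete.
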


\begin{proof} By Theorem \ref{L3}, there is an integer $a$ such that $r_1=\gdepth(I,J^nM/J^{n+1}M)$ and $s_1=\gdepth(I,M/J^nM)$ for all $n\ge a$. Note that $s_1\le r_1$ by Lemma \ref{C22}. For any $0\le l\le s_1$ we set
$$ X_l(n)=\bigcup_{j\le l}\Ass_R(H^j_I(M/J^nM))\cup\{\m\} \text { and } 
S_l(n)=\bigcup_{j\le l}\Ass_R(H^j_I(J^nM/J^{n+1}M))\cup\{\m\}.$$
We will show  by induction on $l$ that the set $X_l(n)$ is stable for  large $n$. It is nothing to prove for the case $l=0$. Let $l>0$. 
From the short exact sequence $$0\sr J^nM/J^{n+1}M\sr M/J^{n+1}M\sr M/J^nM\sr 0$$ we get the following long exact
sequence
$$
\begin{aligned}
\cdots\sr H^{j-1}_I(M/J^nM)\sr H^j_I&(J^nM/J^{n+1}M)\\
&\xrightarrow{f_j} H^j_I(M/J^{n+1}M)\sr H^j_I(M/J^nM)\sr\cdots
\end{aligned}
$$
for all $n>0$. From this, for any $j\le l$ and $n\ge a$ we have
$$\Ass_R(H^j_I(M/J^{n+1}M))\subseteq\Ass_R(H^j_I(M/J^nM))\cup\Ass_R(\Im(f_j)).$$
Note that 
$$\Ass_R(\Im(f_j))\subseteq\Ass_R(H^j_I(J^nM/J^{n+1}M))\cup\Supp_R(\Ker(f_j))$$
by Lemma \ref{L42}. Since $j-1<s_1$,   $\Supp_R(H^{j-1}_I(M/J^nM))$ is a finite set by Lemma \ref{L51}, so that                                                                          $$\Supp_R(\Ker(f_j))\subseteq\Supp_R(H^{j-1}_I(M/J^nM))\subseteq\Ass_R(H^{j-1}_I(M/J^nM))\cup\{\m\}.$$ Therefore we get
$$X_l(n+1)\subseteq X_l(n)\cup S_l(n).$$ 
On the other hand, by the long exact sequence above and Lemma \ref{L51}, we have
$$
\begin{aligned}
\Ass_R(H^j_I(J^nM/J^{n+1}M))&\subseteq\Ass_R(H^j_I(M/J^{n+1}M))\cup\Supp_R(H^{j-1}_I(M/J^nM))\\
&\subseteq\Ass_R(H^j_I(M/J^{n+1}M))\cup\Ass_R(H^{j-1}_I(M/J^nM))\cup\{\m\}.
\end{aligned}
$$
 Hence $$S_l(n)\subseteq X_l(n+1)\cup X_{l-1}(n)$$  for all $n\ge a$.
By Theorem \ref{T52} and the inductive hypothesis, there exists an integer $b\ge a$ such that $S_l(n)=S$ and $X_{l-1}(n)=X$ for all $n\ge b$. Therefore, since $X=X_{l-1}(n+1)\subseteq X_l(n+1)$,
$$S\subseteq X_l(n+1)\cup X=X_l(n+1)$$
for all $n\ge b$. Finally, we obtain the following inclusions
$$X_l(n+2)\subseteq X_l(n+1)\cup S\subseteq X_l(n+1)$$
for all $n\ge b$. Thus  $X(n)$ is stable for large $n$, since $X_l(b+1)$ is finite by  Lemma \ref{L52}, and the proof of Theorem \ref {T53} is complete.
\end{proof}

\end{document}